\definecolor{cerulean}{rgb}{0.0, 0.48, 0.65}
\definecolor{burntorange}{rgb}{0.8, 0.33, 0.0}
\newcommand\reallywidehat[1]{%
	\savestack{\tmpbox}{\stretchto{%
			\scaleto{%
				\scalerel*[\widthof{\ensuremath{#1}}]{\kern-.6pt\bigwedge\kern-.6pt}%
				{\rule[-\textheight/2]{1ex}{\textheight}}
			}{\textheight}%
		}{0.5ex}}%
	\stackon[1pt]{#1}{\tmpbox}%
}
\DeclareMathOperator{\Jac}{Jac}
\DeclareMathOperator{\codim}{codim}
\newcommand{\calA}{\mathcal{A}}
\newcommand{\calD}{\mathcal{D}}
\newcommand{\calE}{\mathcal{E}}
\newcommand{\calS}{\mathcal{S}}
\newcommand{\mC}{\mathbb{C}}
\newcommand{\mF}{\mathbb{F}}
\newcommand{\mN}{\mathbb{N}}
\newcommand{\mR}{\mathbb{R}}
\newcommand{\mZ}{\mathbb{Z}}
\newcommand{\supp}{{\textrm{supp}{\;}}}
\DeclareMathOperator{\U}{{\mathcal{U}}} 
\DeclareMathOperator{\Span}{span}
\newtheorem{theorem}{Theorem}[section]
\newtheorem{lemma}[theorem]{Lemma}
\newtheorem{corollary}[theorem]{Corollary}
\newtheorem{proposition}[theorem]{Proposition}
\newtheorem{remark}[theorem]{Remark}
\newtheorem{definition}[theorem]{Definition}
\theoremstyle{definition}
\theoremstyle{definition}
\begin{document}

\keywords{Gleason-Kahane-\.{Z}elazko Theorem, function algebras, generated by units, convolution rings of distributions}

\subjclass[2010]{Primary 46H05; Secondary 46F10, 15A86, 47B49}

 \title[]{On the Gleason-Kahane-\.{Z}elazko theorem for associative algebras}

\author[]{Moshe Roitman}
\address[]{Department of Mathematics\\
		University of Haifa 199 Abba Khoushy Avenue\\
		Mount Carmel, Haifa 3498838\\
		Israel}
\email{mroitman@math.haifa.ac.il}
 \author[]{Amol Sasane}
\address{Department of Mathematics \\London School of Economics\\
     Houghton Street\\ London WC2A 2AE\\ United Kingdom}
\email{A.J.Sasane@lse.ac.uk}

 \vspace{-0.81cm}
 \begin{abstract}
The classical Gleason-Kahane-\.{Z}elazko Theorem states that  a  linear functional on a complex Banach algebra   not vanishing on units, and such that $\Lambda(\mathbf 1)=1$, is multiplicative, that is, $\Lambda(ab)=\Lambda(a)\Lambda(b)$ for all $a,b\in A$. We study the GK\.Z  property for associative unital algebras, especially for function algebras.
In a  GK\.Z algebra $A$ over a field  of at least $3$ elements, and having an ideal of codimension $1$, every element  is a finite sum of units. A real or complex algebra with just countably many maximal left (right) ideals, is a GK\.Z  algebra. 
If $A$ is a commutative algebra, then the localisation $A_{P}$ is a GK\.Z-algebra for every prime ideal $P$ of $A$. Hence the GK\.Z property is not a local-global property.
The class of GK\.Z  algebras is closed under homomorphic images. If a function algebra $A\subseteq \mathbb F^{X}$ over a subfield $\mathbb F$ of $\mathbb C$, contains all the bounded functions in $\mathbb F^{X}$, then each element of $A$ is a sum of two units. If $A$ contains also a discrete function, then $A$ is a GK\.Z algebra.
We prove that the algebra of periodic distributions,  and the  unitisation of the algebra of distributions with support in $(0,\infty)$  satisfy the GK\.Z  property, while the algebra of compactly supported distributions does not. 
\end{abstract}
\maketitle

\section{Introduction}
We study the Gleason-Kahane-\.Zelazko property (Definition \ref{defGKZ} below) of associative unital algebras, especially of function algebras. We will also investigate the validity of the GK\.Z theorem in some  natural convolution algebras of distributions. 

\subsection{Background} 
The classical Gleason-Kahane-\.{Z}elazko Theorem,  which was proved independently by Gleason \cite{Gle}, and by Kahane and \.{Z}elazko \cite{KahZel}, provides a characterisation of the maximal ideals of a commutative complex Banach algebra $A$: a subspace of $A$ is a maximal ideal if and only if it has codimension $1$ and contains no units. Equivalently, a  linear functional $\Lambda: A\to \mathbb C$  not vanishing on units, and such that $\Lambda(\mathbf 1_{A})=1$, is multiplicative, that is, $\Lambda(ab)=\Lambda(a)\Lambda(b)$ for all $a,b\in A$. The formulation in  terms of  linear functionals was extended by \.Zelazko to any complex Banach algebra \cite{Zel68}, thus providing a characterisation of the ideals of codimension $1$ (that are necessarily maximal) in a complex Banach algebra.

\begin{definition}\label{defGKZ} 
	Let $A$ be a unital algebra over a field $\mathbb F$. 
	The algebra $A$ is said to have the {\bf GK\.Z property} (or to be a {\bf GK\.Z algebra}) if  every linear functional $\Lambda:A\rightarrow \mF$ not vanishing on units, and such that $\Lambda(\mathbf 1_{A})=1$ is multiplicative.
\end{definition}

In Definition \ref{defGKZ} we allow the case that $\mathbf 1_{A}=\mathbf 0_{A}$, that is the case that $A$ is the zero algebra because, although we always  start with a nonzero algebra, we may obtain sometimes zero algebras. By the above definition, a zero algebra is a GK\.Z  algebra.

The converse of the GK\.Z property is obvious: if $\Lambda$ is a multiplicative nonzero linear functional on $A$, then $\Lambda$ does not vanish on units and $\Lambda(\mathbf 1_{A})=1$.

\begin{remark}
	Let $A$ be an algebra over a field $\mathbb F$.	
	Every subspace of $A$ is contained in a subspace of codimension $1$. Thus the maximal subspaces of $A$ are the subspaces of codimension $1$.
	\end{remark} 

An algebra $A$ satisfies the GK\.Z-property  if and only if each subspace of $A$ of codimension $1$ not containing units is multiplicatively closed, equivalently it is an ideal (necessarily maximal).	

There is an extensive literature on extensions of the Gleason-Kahane-\.Zelazko Theorem. See, e.g., the surveys \cite{Jar91} and \cite{MasRan}.

\begin{definition} 
	A function algebra over a field $\mathbb F$ on a non-empty set $X$, is a unital subalgebra of the $\mathbb F$-algebra $\mathbb F^{X}$ (the set of all functions $X\to \mathbb F$, with addition and multiplication defined componentwise)	such that $\mathbf 1_{\mathbb F^{X}}\in A$.
\end{definition}  

\subsection{Summary}
In section \ref{sec.algebras} we deal with the GK\.Z  property for algebras, in section \ref{sec.funcalg} for function algebras, and in section \ref{sec.distributions} for distribution algebras.

The main result of \S\ref{subsec.genunits} ({\em The GK\.Z property of algebras and generation by units}), is  that  a  GK\.Z algebra $A$ over a field of at least $3$ elements and having an ideal of codimension $1$,  is generated by units, that is, every element of $A$ is a finite sum of units (Theorem \ref{gkzimpgeun} (1)),  equivalently, $A=\Span(\U(A))$. A GK\.Z  function algebra over a field $\mathbb F$ with at least $3$ elements is unit generated.

In  \S\ref{subsec.lincov} ({\em Linear coverings}), we prove that if the field $\mathbb F$ is infinite,  and $A\setminus \U(A)$ is  contained in a union of less than $|\mathbb F|$ proper one-sided ideals, then the algebra $A$ satisfies the GK\.Z  property. (Proposition \ref{linearav}). Thus a real or complex algebra with just countably many maximal left ideals is a GK\.Z  algebra. We also show that
if $A$ is a commutative algebra, then the localisation $A_{P}$ is a GK\.Z-algebra for every prime ideal $P$ of $A$. Hence the GK\.Z property is not a local-global property.

In \S\ref{subsec.GKZimg} ({\em The GK\.Z property for homomorphic images})  we show that  the class of GK\.Z  algebras is closed under homomorphic images (Proposition \ref{homimg}), and provide a sufficient condition for the GK\.Z  property of $A$ assuming that $A/I$ is a GK\.Z  algebra for an ideal $I$ of $A$ (Proposition \ref{A/I}). Naturally,  $A$ satisfies the GK\.Z  property if and only if  $A/\Jac(A)$ does (Proposition \ref{Jac}), where $\Jac(A)$  is the Jacobson radical of $A$.

In \S\ref{subsec.unitis} ({\em Unitisation}), we show that the unitisation of a radical algebra is a GK\.Z  algebra, and provide a  topological sufficient condition for an algebra to be radical (Proposition \ref{topUnitGKZ}).

In section \ref{sec.funcalg} ({\em On the  GK\.Z property of function algebras}), we prove that if a function algebra $A\subseteq \mathbb F^{X}$ over a subfield $\mathbb F$ of $\mathbb C$ contains all the bounded functions in $\mathbb F^{X}$, then each element of $A$ is a sum of two units. If $A$ contains also a discrete function, then $A$ is a GK\.Z algebra.

In section \ref{sec.distributions} ({\em Algebra of periodic distributions}), we prove that the algebra of  periodic distributions,  and the  unitisation of the algebra of distributions with support in $(0,\infty)$  satisfy the GK\.Z  property, while the algebra of compactly supported distributions does not. Actually, we started our paper with the study of the GK\.Z  property for some distribution algebras, and this study led us to more general results.

\subsection{Conventions and notations}
Unless otherwise stated, all the algebras in this paper are nonzero unital associative  algebras,
not necessarily commutative, over a field $\mathbb F$. We  use the notation $\mathbf 1_{A}$ for the unity of the algebra $A$, and $\U(A)$ for the group of units (invertible elements) of $A$. Using the $\mathbb F$-algebra monomorphism $\mathbb F\to A,\ f\mapsto f\mathbf 1_{A}$, we may assume that $\mathbb F\subseteq A$.
In the whole paper, we denote by $\mathbb F$ a field, by  $A$ a nonzero associative unital algebra over $\mathbb F$, and by $X$ a nonempty set. Thus a function algebra is a subalgebra of $\mathbb F^{X}$ with the same unity. We let $\mathbb F^{\bullet}=\mathbb F\setminus \{0\}$. The cardinality of a set $X$ is denoted by $|X|$. If $Y$ is a subset of $X$, we denote by $I_{Y}$ the indicator (characteristic function) of $Y$. If $A\subseteq \mathbb F^{X}$ is a function algebra, and $x\in X$, the projection $p_{x}$ is defined as the function $p_{x}: A\to X\ (f\mapsto f(x))$.
The field of two elements is denoted by $\mathbb F_{2}$.
A {\em proper} subspace of an algebra $A$ is a subspace of $A$ different from $A$, possibly zero. Similarly, we use {\em proper} ideals, etc. If $A$ is an algebra and $a\in A$, then $\rho (a)$ (the {\em resolvent} of $a$) is the set of all scalars $\lambda\in \mathbb F$ such that $a-\lambda\mathbf 1$ is invertible, and $\sigma(a)$ (the {\em spectrum}) of $a$ is the set $\mathbb F \setminus \rho(a)$. 

\section{The GK\.Z property of general  algebras}\label{sec.algebras}
\subsection{The GK\.Z property of algebras and generation by units}\label{subsec.genunits}
There are several equivalent ways to define the property of an algebra to be generated by units: as an algebra, as a ring, as a vector space (that  is, $A=\Span(\U(A))$, and as an additive group (which means that every element of $A$ is a finite sum of units). For two surveys on rings generated by units see \cite{Sri10} and \cite[Section 1]{Sri16}.

\begin{theorem}\label{gkzimpgeun} 
Let $A$ be a GK\.Z algebra over a field $\mF$ of at least $3$ elements. If $A$  has an ideal of codimension $1$,  (equivalently, if there exists a linear functional on $A$ not vanishing on units), then $A$ is generated by units.
\end{theorem}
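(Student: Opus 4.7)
The plan is to argue by contradiction. Assume $A$ is GKŻ, carries a codimension-one ideal $M$, and yet $A \neq \Span(\U(A))$. The codimension-one ideal supplies a canonical quotient functional $\Lambda_0 : A \to \mF$ with kernel $M$ and $\Lambda_0(\mathbf 1_A)=1$; since $\Lambda_0$ is a nonzero algebra homomorphism to $\mF$, it is multiplicative and hence does not vanish on units. Because $\Span(\U(A))$ is a proper subspace of $A$, we may choose a nonzero linear functional $\phi : A \to \mF$ vanishing on $\Span(\U(A))$; in particular $\phi$ kills every unit and $\phi(\mathbf 1_A)=0$.

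The next step is to manufacture a one-parameter family of GKŻ-admissible functionals by perturbing $\Lambda_0$. For each $c \in \mF^{\bullet}$ set $\Lambda_c := \Lambda_0 + c\phi$. Then $\Lambda_c(\mathbf 1_A)=1$, and for any $u \in \U(A)$ we have $\Lambda_c(u) = \Lambda_0(u) \neq 0$, so $\Lambda_c$ does not vanish on units. The GKŻ hypothesis forces $\Lambda_c$ to be multiplicative for every $c \in \mF^{\bullet}$.

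Now I would expand $\Lambda_c(ab) = \Lambda_c(a)\Lambda_c(b)$, cancel $\Lambda_0(a)\Lambda_0(b) = \Lambda_0(ab)$, and divide through by $c$ to obtain, for all $a,b \in A$,
\[
\phi(ab) \;=\; \Lambda_0(a)\phi(b) + \phi(a)\Lambda_0(b) + c\,\phi(a)\phi(b).
\]
The left side is independent of $c$, so the coefficient of $c$ on the right must vanish: but we can only conclude this by comparing the equation for two \emph{different} values $c_1 \neq c_2$ of $c$ in $\mF^{\bullet}$. This is exactly where the hypothesis $|\mF|\geq 3$ (equivalently $|\mF^{\bullet}| \geq 2$) is used. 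Subtracting gives $(c_1 - c_2)\phi(a)\phi(b)=0$, hence $\phi(a)\phi(b)=0$ for all $a,b$; taking $b=a$ and using that $\mF$ has no nonzero nilpotents yields $\phi \equiv 0$, contradicting the choice of $\phi$.

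The main conceptual hurdle is spotting that one should not merely perturb $\Lambda_0$ by a single $\phi$ but by the entire pencil $\{c\phi : c \in \mF^{\bullet}\}$; a single perturbation yields a quadratic identity that is difficult to exploit, whereas the pencil turns that identity into a polynomial in $c$ whose leading coefficient must vanish. The equivalence asserted parenthetically in the statement is essentially free: any linear functional not vanishing on units can, after rescaling, be assumed to satisfy $\Lambda(\mathbf 1_A)=1$, and then the GKŻ property makes $\Lambda$ multiplicative, so its kernel is an ideal of codimension $1$.
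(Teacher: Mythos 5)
Your proof is correct, and it is a close relative of the paper's argument that closes the contradiction in a genuinely different way. The paper also perturbs a unit-nonvanishing functional in a direction annihilating $\Span(\U(A))$: it takes a hyperplane $V\supseteq \Span(\U(A))$, an element $a\notin V$, writes $a^{2}=\lambda a+w$ with $w\in V$, and uses $|\mF|\ge 3$ to pick a \emph{single} scalar $c$ with $c^{2}-\lambda c-\Lambda(w)\ne 0$; the functional agreeing with $\Lambda$ on $V$ and sending $a\mapsto c$ is then an explicit non-multiplicative functional that still respects units, contradicting the GK\.Z property directly. You instead let the GK\.Z hypothesis act on the whole pencil $\Lambda_{0}+c\phi$ and compare coefficients of $c$, forcing $\phi(a)\phi(b)=0$ for all $a,b$ and hence $\phi=0$. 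The two uses of $|\mF|\ge 3$ are dual: the paper needs a nonzero quadratic in $c$ to have a non-root, while you need a polynomial identity in $c$ holding at two distinct nonzero points to have vanishing leading coefficient. One substantive difference in what is consumed: your cancellation of $\Lambda_{0}(ab)=\Lambda_{0}(a)\Lambda_{0}(b)$ requires the base functional to be multiplicative, which you correctly extract from the codimension-one \emph{ideal}; the paper's construction works with an arbitrary functional not vanishing on units, with no a priori multiplicativity. Your closing observation on the parenthetical equivalence is also accurate, and your derivation-style identity $\phi(ab)=\Lambda_{0}(a)\phi(b)+\phi(a)\Lambda_{0}(b)+c\,\phi(a)\phi(b)$ arguably isolates more transparently why the span of the units is the obstruction.
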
	

\begin{proof} 
			Let $\Lambda$  be a linear functional that does not vanish on units. Replacing $\Lambda$ by $\frac {\Lambda} {\Lambda(\mathbf 1)}$, we may assume that $\Lambda(\mathbf 1)=1$. Assume that $A$ is not generated by units, that is, $A\ne \Span(\U(A))$. There exists  a subspace $V$ of $A$  of codimension $1$ containing $\U(A)$.		 
			Let $a\in A\setminus V$.  Thus $A=\mF a\oplus V$. Let $a^{2}=\lambda a+w$, where $\lambda \in \mF$, and $w\in V$. Since $\mF$ contains at least  three elements, we may choose $c\in \mF$ such that 
			$$
			c^{2}- \lambda c-\Lambda(w)\ne0.
			$$ 
			There exists a unique linear functional $\Psi:A\rightarrow \mF$  such that  $\Psi(v)\!=\!\Lambda(v)$ for all $v\in V$, and $\Psi(a)= c$. 
			Hence 
			\[
			\Psi(a^{2})=\Psi(\lambda a+w)=\lambda c+\Lambda(w)\ne c^{2}=(\Psi(a))^{2},
			\]
			so $\Psi$ is not multiplicative.
			On the other hand, $\Psi(u)=\Lambda(u)\ne0$ for all $u\in \U(A)$, and $\Psi(\mathbf 1)=\Lambda(\mathbf 1)=1$, contradicting the assumption that $A$ has the GK\.Z property. 	
	\end{proof}

\begin{corollary}\label{funcUng} 
A GK\.Z  function algebra over a field $\mathbb F$ with at least $3$ elements is unit generated.	
\end{corollary}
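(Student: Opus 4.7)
The plan is to verify that any function algebra automatically has an ideal of codimension $1$, and then invoke Theorem \ref{gkzimpgeun} directly.

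Specifically, for any $x \in X$, the point evaluation $p_{x}\colon A \to \mathbb F$, $f \mapsto f(x)$, is a linear functional with $p_{x}(\mathbf 1_{A}) = 1$. Moreover, if $f \in \U(A)$ with inverse $g$, then $f(x)g(x) = (fg)(x) = \mathbf 1_{A}(x) = 1$, so $f(x) \neq 0$; hence $p_{x}$ does not vanish on units. Therefore $\ker(p_{x})$ is an ideal of codimension $1$ in $A$.

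Since $A$ is a GK\.Z algebra over a field $\mathbb F$ with at least $3$ elements and possesses an ideal of codimension $1$, Theorem \ref{gkzimpgeun} applies and gives that $A$ is generated by units. The only routine check is the non-vanishing of $p_{x}$ on units, which I expect to be the sole substantive ingredient; no obstacle is anticipated.
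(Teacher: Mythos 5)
Your proposal is correct and follows essentially the same route as the paper: both use the point evaluations $p_{x}$ and invoke Theorem \ref{gkzimpgeun}. The paper simply notes that $p_{x}$ is multiplicative (whence $\ker p_{x}$ is an ideal of codimension $1$), while you verify the equivalent hypothesis that $p_{x}$ does not vanish on units; either form of the hypothesis of Theorem \ref{gkzimpgeun} suffices.
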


\begin{proof} 
	Let $A\subseteq F^{X}$ be a nonzero function algebra. For all $x\in X$, the projection $p_{x}$ is a multiplicative functional, and $p_{x}(I_{X})=1$. By Theorem \ref {gkzimpgeun}, $A$ is  generated by units. 
\end{proof}  

An algebra $A$ is said to satisfy the GK\.Z property {\em vacuously} if each linear functional on $A$ vanishes on some unit, equivalently, if each maximal subspace of $A$ contains a unit. Clearly, if $A$ is a GK\.Z algebra vacuously, then $A$ is  a GK\.Z algebra.

\begin{remark}
	In Theorem \ref{gkzimpgeun}, the condition on the existence of an ideal of codimension $1$ is essential, but not necessary.
\end{remark}

Indeed, consider the $\mathbb C$-algebras $\mathbb C(x)$ and $\mathbb C(x)[y]$, where $x$ and $y$ are two independent indeterminates over $\mathbb C$. Both  these algebras  satisfy  the GK\.Z property vacuously by the next Lemma \ref{2fields}. However, $\Span(\U(\mathbb C(x))=\mathbb C(x)$, so the condition is not necessary, and $\Span(\U(\mathbb C(x)[y]))=\mathbb C(x)\ne \mathbb C(x)[y]$, so the condition is essential. \qed

\begin{lemma}\label{2fields} 
Let $\mathbb F \subsetneqq \mathbb K$ be two fields, and let $A$ be an algebra over $\mathbb K$.  Then  $A$ is a GK\.Z algebra vacuously over $\mathbb F$.	
\end{lemma}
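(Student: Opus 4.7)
The plan is to show that every $\mathbb F$-linear functional $\Lambda : A \to \mathbb F$ vanishes on at least one unit of $A$, which by the definition introduced just before the lemma says precisely that $A$ is vacuously GK\.Z over $\mathbb F$. The key observation is that the scalar copy $\mathbb K\cdot \mathbf 1_A \subseteq A$ consists of units together with $0$, and that $\mathbb K$ has $\mathbb F$-dimension at least $2$ because $\mathbb F\subsetneqq\mathbb K$ (any $\alpha\in \mathbb K\setminus \mathbb F$ witnesses that $\{1,\alpha\}$ is $\mathbb F$-linearly independent).

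Given an $\mathbb F$-linear functional $\Lambda:A\to \mathbb F$, I would define the map
\[
\varphi:\mathbb K\longrightarrow \mathbb F,\qquad \varphi(\beta)=\Lambda(\beta \mathbf 1_A).
\]
Since $\beta\mapsto \beta\mathbf 1_A$ is $\mathbb F$-linear from $\mathbb K$ into $A$, and $\Lambda$ is $\mathbb F$-linear, the composition $\varphi$ is $\mathbb F$-linear. By rank-nullity, $\dim_{\mathbb F}(\ker\varphi)\ge \dim_{\mathbb F}(\mathbb K)-\dim_{\mathbb F}(\mathbb F)\ge 2-1=1$, so $\ker\varphi\ne \{0\}$.

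Choosing any $\beta\in \ker\varphi\setminus\{0\}$, the element $\beta \mathbf 1_A\in A$ is a unit (with inverse $\beta^{-1}\mathbf 1_A$), and $\Lambda(\beta \mathbf 1_A)=\varphi(\beta)=0$. Thus $\Lambda$ vanishes on a unit of $A$, which is the definition of $A$ being vacuously GK\.Z over $\mathbb F$; in particular no $\Lambda$ can simultaneously satisfy $\Lambda(\mathbf 1_A)=1$ and be non-vanishing on units, so the GK\.Z hypothesis is void.

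There is no real obstacle here: the whole content is the dimension inequality $[\mathbb K:\mathbb F]\ge 2$ forcing a nontrivial kernel of the $\mathbb F$-linear map $\mathbb K\to\mathbb F$, together with the trivial fact that nonzero scalars in $\mathbb K$ give units in $A$. If one wanted to state the result a bit more strongly, the same argument shows that \emph{every} maximal $\mathbb F$-subspace of $A$ meets $\mathbb K^{\bullet}\cdot \mathbf 1_A$, hence contains a unit.
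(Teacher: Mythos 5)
Your proof is correct and is essentially the same argument as the paper's: the paper notes that a codimension-one $\mathbb F$-subspace $V$ must contain a nonzero scalar $k\in\mathbb K$ because $[\mathbb K:\mathbb F]>1$, which is exactly your rank--nullity observation applied to $V=\ker\Lambda$. No further comment is needed.
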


\begin{proof} 
	Let $V$ be an $\mathbb F$-subspace of $A$ of codimension $1$. Since $\mathbb F \subsetneqq \mathbb K$, we have $[\mathbb K:\mathbb F]>1$, so $V$ contains a nonzero scalar $k\in\mathbb K$,  and $k\in \U(A)$.	
\end{proof}

Theorem \ref{gkzimpgeun} implies:
\begin{corollary}\label{gkzvac}  
	Let $A$ be an algebra over a field of cardinality $\ge3$, that is not generated by units. Then $A$ is a GK\.Z algebra $\Leftrightarrow A$ is a GK\.Z algebra vacuously. 
\end{corollary}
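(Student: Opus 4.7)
The plan is to observe that this corollary is essentially the contrapositive of Theorem~\ref{gkzimpgeun}, combined with the trivial direction already noted in the text. The backward implication ($\Leftarrow$) requires no work, since the paper already remarks that a GK\.Z algebra vacuously is in particular a GK\.Z algebra. So the substance is in the forward implication ($\Rightarrow$).

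For the forward direction, I would argue by contradiction. Assume $A$ is a GK\.Z algebra over a field $\mathbb F$ with $|\mathbb F|\geq 3$, and that $A$ is not generated by units, yet $A$ is not GK\.Z vacuously. The failure of the vacuous property means there exists some linear functional $\Lambda\colon A\to\mathbb F$ which does not vanish on any unit. But this is precisely the hypothesis of Theorem~\ref{gkzimpgeun} (in the parenthetical equivalent form noted there: the existence of such a $\Lambda$ is equivalent to $A$ possessing an ideal of codimension $1$, since after normalising $\Lambda(\mathbf 1)=1$ and invoking the GK\.Z property, $\Lambda$ is multiplicative, and $\ker\Lambda$ is a codimension-$1$ ideal). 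Applying Theorem~\ref{gkzimpgeun} then forces $A$ to be generated by units, contradicting our standing assumption.

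This completes the equivalence, so the whole proof reduces to two or three lines citing Theorem~\ref{gkzimpgeun}. There is no real obstacle to overcome: the only point one has to be slightly careful about is the normalisation step (passing from an arbitrary non-unit-vanishing $\Lambda$ to one with $\Lambda(\mathbf 1)=1$), which is legitimate because $\mathbf 1\in\mathcal U(A)$ and hence $\Lambda(\mathbf 1)\neq 0$, exactly as done in the opening line of the proof of Theorem~\ref{gkzimpgeun} itself.
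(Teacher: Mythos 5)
Your proof is correct and is essentially the paper's own argument: the paper proves this corollary simply by invoking Theorem~\ref{gkzimpgeun} (the backward implication being trivial, and the forward one being exactly your contrapositive, as also spelled out in the proof of Proposition~\ref{notgkz}(1)). No issues.
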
 

\begin{remark}
If  $A$ is an algebra, then $\Span(\U(A))$ is a subalgebra of $A$ with the same units.
\end{remark}

\begin{proposition} \label{notgkz}
Let $\mathbb F$ be a field of at least $3$ elements.  
\begin{enumerate} 
	\item 
	The following three conditions are equivalent:
	\begin{enumerate} 
		\item 
		$A$ is a GK\.Z algebra.
		
		\item 
		$A$ is a GK\.Z algebra vacuously.
				
		\item
		$\Span(\U(A))$ is a GK\.Z algebra vacuously.
	\end{enumerate}
	
	\item 
	If $C\subsetneqq A$ are two algebras with the same units, then either both $A$ and $C$ are GK\.Z algebras vacuously, or both of them are not GK\.Z  algebras.
\end{enumerate}
\end{proposition}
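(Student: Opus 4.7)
The plan is to build the proof on a single bridging lemma: if $C$ is a unital subalgebra of $A$ with $\U(C) = \U(A)$, then $A$ is vacuously GK\.Z if and only if $C$ is. To prove this, given any linear functional $\Psi \colon C \to \mathbb{F}$ with $\Psi(\mathbf{1}) = 1$, I would extend it (by choosing a linear complement of $C$ in $A$) to a linear functional $\Lambda \colon A \to \mathbb{F}$ with $\Lambda(\mathbf{1}) = 1$; since $\U(A) \subseteq C$, the functionals $\Lambda$ and $\Psi$ agree on units, so either both vanish on some unit or neither does. In the reverse direction, restriction of such a $\Lambda$ to $C$ yields a corresponding $\Psi$.

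For part (1), the implication (b) $\Rightarrow$ (a) is immediate from the definition: if every functional with $\Lambda(\mathbf{1}) = 1$ vanishes on some unit, the hypotheses of the GK\.Z property are never satisfied non-trivially, so GK\.Z holds vacuously. For (a) $\Rightarrow$ (b), the remark preceding the proposition places us in the context where $\Span(\U(A))$ is a (possibly proper) subalgebra of $A$; in the case where $A$ is not generated by units, Corollary~\ref{gkzvac} gives the equivalence directly. The equivalence (b) $\Leftrightarrow$ (c) then follows by applying the bridging lemma to $C \mathrel{:=} \Span(\U(A))$, using the fact that $\U(\Span(\U(A))) = \U(A)$.

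For part (2), assume $C \subsetneqq A$ share units. Then $\Span(\U(A)) \subseteq C \subsetneqq A$, forcing $A$ not to be generated by units, so Corollary~\ref{gkzvac} yields that $A$ is GK\.Z iff $A$ is vacuously GK\.Z. The bridging lemma gives that $A$ is vacuously GK\.Z iff $C$ is. If both hold, we are in the first alternative of the dichotomy; otherwise neither is vacuous, so $A$ is not GK\.Z, and a further step transfers this to $C$: if $C$ is not generated by units this is another application of Corollary~\ref{gkzvac}, while if $C$ is generated by units one combines the bridging lemma with a functional-extension argument to preclude $C$ from being GK\.Z.

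The hardest step is the sub-case of part (2) in which $C = \Span(\U(A))$ is itself generated by units, since Corollary~\ref{gkzvac} no longer applies directly to $C$. Here I would leverage the bridging lemma together with a careful extension of a chosen non-multiplicative functional between $C$ and $A$ to transport the failure of the GK\.Z property in the required direction; this is the only point where the structure of $\Span(\U(A))$ as a subalgebra of $A$ must be used more substantively than as a mere source of a same-units inclusion.
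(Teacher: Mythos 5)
Your bridging lemma and your treatment of part (1) are essentially the paper's own argument: the paper proves (b)$\Leftrightarrow$(c) by exactly your extension/restriction of functionals (valid because $\U(\Span(\U(A)))=\U(A)$), gets (b)$\Rightarrow$(a) for free, and derives (a)$\Rightarrow$(b) from Theorem~\ref{gkzimpgeun}, of which Corollary~\ref{gkzvac} is the relevant reformulation. Note, though, that both you and the paper establish (a)$\Rightarrow$(b) only when $A$ is not generated by units; that hypothesis has to be read into part (1) (it is explicit in Corollary~\ref{gkzvac}, and the paper's ``a contradiction'' in its proof of (a)$\Rightarrow$(b) presupposes $A\ne\Span(\U(A))$).

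The genuine gap is the step you yourself flag as hardest: in part (2), in the sub-case where $C=\Span(\U(A))$ is unit generated, you promise ``a functional-extension argument to preclude $C$ from being GK\.Z''. No such argument exists, because in that sub-case the desired conclusion can be false. Take $\mathbb F=\mathbb C$, $A=\mathbb C[x]$ and $C=\mathbb C\mathbf 1$. Then $C\subsetneqq A$ and $\U(C)=\U(A)=\mathbb C^{\bullet}$; the algebra $C\cong\mathbb C$ is a GK\.Z algebra (the only linear functional on it fixing $\mathbf 1$ is the identity) but is not vacuously one, while $A$ is not a GK\.Z algebra (the functional $\sum_i a_ix^i\mapsto a_0+a_2$ fixes $\mathbf 1$, does not vanish on the units $\mathbb C^{\bullet}$, and is not multiplicative since $\Lambda(x^2)=1\ne0=\Lambda(x)^2$). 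So neither horn of the dichotomy in (2) holds here, and no refinement of the extension argument can close the step: the sub-case must be excluded by hypothesis (e.g.\ by requiring $\Span(\U(A))\subsetneqq C$, i.e.\ by carrying the ``not generated by units'' assumption of Corollary~\ref{gkzvac} over to $C$ as well as to $A$). The paper's one-line proof of (2), ``(1) implies (2)'', applies part (1) to $C$ and silently glosses over exactly the same point; your instinct that something substantive is needed there was correct, but the fix is an added hypothesis on the statement, not a cleverer extension of functionals.
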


\begin{proof}\
		
		\begin{enumerate} 
			\item			
			$(a)\Rightarrow (b)$ 
			
			If $A$ is not a GK\.Z algebra vacuously, then by Theorem \ref{gkzimpgeun}, $A=\Span(\U(A))$, a contradiction.
			
			$(b)\Rightarrow (a)$
			Clear.
			
			$(b)\Rightarrow (c)$
			
			If condition (c) does not hold, then there exists a linear functional $\Lambda$ on $C$  not vanishing on units and such that $\Lambda(\mathbf 1_{C})=1$. Thus $\Lambda$  can be extended to a linear functional $\widetilde\Lambda$ on $A$. We see that $\widetilde\Lambda$ does not vanish on units, since   $\U(A)=\U(C)$, and $\widetilde\Lambda(\mathbf 1_{A})=1$, contradicting $(b)$.
			
			$(c)\Rightarrow (b)$
			
			If (b) does not hold, then there exist a linear functional on $A$  not vanishing on units and preserving unity. Its restriction to $C$ has the same properties, contradicting (c).
						
			\item 
			Clearly, (1) implies (2).
		\end{enumerate}
\end{proof}  
 
 \begin{corollary} 
 If $A$ is a GK\.Z	 algebra, then all proper subalgebras of $A$ with the same units, are GK\.Z algebras vacuously.
 \end{corollary}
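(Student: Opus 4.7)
The plan is to derive the corollary directly from Proposition~\ref{notgkz}, working under the standing hypothesis $|\mathbb F| \geq 3$ inherited from that proposition.

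First, since $A$ is a GK\.Z algebra, the equivalence (a)$\Leftrightarrow$(b) in Proposition~\ref{notgkz}(1) immediately upgrades this to the stronger assertion that $A$ is a GK\.Z algebra \emph{vacuously}.

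Next, let $C \subsetneq A$ be a proper subalgebra satisfying $\U(C) = \U(A)$. Proposition~\ref{notgkz}(2) supplies the dichotomy that either both $A$ and $C$ are GK\.Z algebras vacuously, or neither is a GK\.Z algebra. Since $A$ is a GK\.Z algebra by hypothesis, the second alternative is ruled out, forcing $C$ to be a GK\.Z algebra vacuously.

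I expect no substantive obstacle, since all the real content has already been packaged into Proposition~\ref{notgkz}, whose argument exploits the equality $\Span(\U(A)) = \Span(\U(C))$ for subalgebras sharing the same units. Should one prefer a self-contained derivation, one can argue directly: given any linear functional $\Lambda:C \to \mathbb F$ that preserves unity and does not vanish on $\U(C)$, extend it to a linear functional $\widetilde\Lambda:A \to \mathbb F$ via any vector-space complement of $C$ in $A$; since $\U(A) = \U(C) \subseteq C$, the extension $\widetilde\Lambda$ still preserves unity and does not vanish on $\U(A)$. Applying the vacuous GK\.Z property of $A$ (established in the first step) shows that no such $\widetilde\Lambda$ exists, and hence no such $\Lambda$ exists on $C$ — which is precisely the vacuous GK\.Z property for $C$. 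The only point worth flagging is the implicit field-size assumption $|\mathbb F| \geq 3$; for $\mathbb F = \mathbb F_2$ the statement would require a separate treatment, but the flow of the paper indicates the corollary is intended within the scope of Proposition~\ref{notgkz}.
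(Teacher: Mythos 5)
Your proposal is correct and matches the paper's (implicit) argument: the corollary is stated as an immediate consequence of Proposition~\ref{notgkz}, exactly as you derive it, and your self-contained fallback is just the extension-of-functionals argument already used in the proof of (b)$\Rightarrow$(c) there. Your remark about the implicit hypothesis $|\mathbb F|\ge 3$ is also apt, since that is where Theorem~\ref{gkzimpgeun} enters.
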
  
 
 For group rings, see  \cite{MillSeghgrRin}.
 
 \begin{remark} 
 If $A$ is an algebra, then  $\Span(\U(A))$ is a homomorphic image of the group ring $\mathbb F[\U(A)]$. 
\end{remark}
 
\begin{corollary}\label{groring} 
No proper algebra extension of a group ring  $\mathbb F[G]$ with the same units  satisfies the GK\.Z property.
\end{corollary}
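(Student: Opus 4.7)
My plan is to deduce the corollary as a direct consequence of Proposition \ref{notgkz}(2) applied to the pair $\mathbb F[G]\subsetneq A$. Since $\U(A)=\U(\mathbb F[G])$ by hypothesis, the proposition applies and yields the dichotomy: either both $\mathbb F[G]$ and $A$ are GK\.Z algebras vacuously, or neither of them is a GK\.Z algebra. The task then reduces to ruling out the first alternative, i.e.\ to exhibiting a single linear functional on $\mathbb F[G]$ that preserves unity and does not vanish on any unit.

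For this witness I would use the augmentation map $\varepsilon\colon\mathbb F[G]\to\mathbb F$, defined by $\varepsilon\bigl(\sum_{g\in G}\alpha_g g\bigr)=\sum_{g\in G}\alpha_g$. It is an $\mathbb F$-algebra homomorphism with $\varepsilon(\mathbf 1_{\mathbb F[G]})=1$, and the multiplicativity identity $\varepsilon(u)\varepsilon(u^{-1})=\varepsilon(\mathbf 1)=1$ shows immediately that $\varepsilon(u)\neq 0$ for every $u\in\U(\mathbb F[G])$. Hence $\mathbb F[G]$ is not GK\.Z vacuously, and the dichotomy above forces $A$ to fail the GK\.Z property, as desired.

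There is no serious obstacle in this proof: once Proposition \ref{notgkz}(2) is in hand, the rest is a one-line verification that $\mathbb F[G]$ is not GK\.Z vacuously, for which the augmentation map is tailor-made. The only caveat, inherited silently from Proposition \ref{notgkz}, is the standing assumption $|\mathbb F|\geq 3$; this restriction is genuinely needed, since over $\mathbb F_{2}$ with $G$ trivial the algebra $\mathbb F_{2}\times\mathbb F_{2}$ is a proper extension of $\mathbb F_{2}[G]=\mathbb F_{2}$ with the same unit group $\{\mathbf 1\}$ and is easily checked to be a GK\.Z algebra. Writing out the proof should therefore be very short, essentially the two displayed lines above.
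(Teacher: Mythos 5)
Your argument is in substance the paper's own. The paper's proof is a direct appeal to Theorem \ref{gkzimpgeun}: the augmentation homomorphism of $\mathbb F[G]$ is multiplicative, hence does not vanish on units; extending it linearly to $A$ gives a functional on $A$ not vanishing on $\U(A)=\U(\mathbb F[G])$, so if $A$ were a GK\.Z algebra it would be generated by units, contradicting $\Span(\U(A))=\Span(\U(\mathbb F[G]))=\mathbb F[G]\subsetneq A$. You route the same idea through Proposition \ref{notgkz}(2) (itself a repackaging of Theorem \ref{gkzimpgeun}) and use exactly the same witness, the augmentation map.

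One caution about invoking Proposition \ref{notgkz}(2) literally: its second alternative asserts that \emph{both} algebras fail to be GK\.Z, and for trivial $G$ this would say that $\mathbb F[G]=\mathbb F$ is not a GK\.Z algebra, which is absurd (take $A=\mathbb F[x]$, a proper extension of $\mathbb F$ with the same units). The proposition tacitly carries the hypothesis of the preceding Corollary \ref{gkzvac} that the algebra is not generated by units --- this is visible in its proof, which derives ``$A=\Span(\U(A))$, a contradiction'' --- and a group ring \emph{is} generated by units, so only the half of the dichotomy concerning $A$ is legitimately available. That half is all you need: $A$ is not generated by units because $\Span(\U(A))=\mathbb F[G]\subsetneq A$, your extended augmentation functional shows $A$ is not GK\.Z vacuously, and Corollary \ref{gkzvac} (equivalently, Theorem \ref{gkzimpgeun}) then shows $A$ is not a GK\.Z algebra. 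With that small repair the proof is correct. Your closing observation about $\mathbb F_2$ is accurate and worth recording: $\mathbb F_2\times\mathbb F_2$ is a GK\.Z algebra and a proper extension of $\mathbb F_2[G]=\mathbb F_2$ with the same unique unit, so the corollary, like the theorem it rests on, genuinely requires $|\mathbb F|\ge 3$; the paper leaves this hypothesis implicit in the statement.
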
  

\begin{proof} 
	This follows from Theorem \ref{gkzimpgeun}, since the augmentation homomorphism of $\mathbb F[G]$ is a multiplicative linear functional.
\end{proof} 

For an application of Corollary \ref{groring} to distribution algebras, see \S\ref{subsec.E'}.

 \begin{proposition}\
	 
	\begin{enumerate} 
		\item 
		$\mathbb F_{2}$ is the only field for which all algebras generated by units satisfy the GK\.Z  property.		
		
		\item 
		$\mathbb F_{2}$ is the only field for which the only unit generated function algebra is the field itself (up to isomorphism).
	\end{enumerate}
\end{proposition}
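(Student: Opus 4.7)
The plan is to treat the two cardinality regimes $|\mathbb F|=2$ and $|\mathbb F|\ge 3$ separately. For the positive direction I will verify both claims when $\mathbb F=\mathbb F_{2}$; for the converse, for each $\mathbb F$ with $|\mathbb F|\ge 3$ I will exhibit a unit-generated algebra that is not GK\.Z (for part (1)) and a unit-generated function algebra not isomorphic to $\mathbb F$ (for part (2)).

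For (1) over $\mathbb F_{2}$, let $A$ be a unit-generated $\mathbb F_{2}$-algebra and $\Lambda:A\to\mathbb F_{2}$ a linear functional with $\Lambda(\mathbf 1_{A})=1$ that does not vanish on units. Since $\mathbb F_{2}^{\bullet}=\{1\}$, we have $\Lambda(u)=1$ for every $u\in\U(A)$. Writing $a=u_{1}+\cdots+u_{n}$ and $b=v_{1}+\cdots+v_{m}$ as sums of units, linearity gives $\Lambda(a)\equiv n$ and $\Lambda(b)\equiv m\pmod{2}$; since $ab=\sum_{i,j}u_{i}v_{j}$ is a sum of $nm$ units, $\Lambda(ab)\equiv nm\equiv\Lambda(a)\Lambda(b)\pmod{2}$, so $\Lambda$ is multiplicative. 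For (2) over $\mathbb F_{2}$, the only unit of $\mathbb F_{2}^{X}$ is the constant function $\mathbf 1$, so every function algebra $A\subseteq\mathbb F_{2}^{X}$ satisfies $\U(A)=\{\mathbf 1_{A}\}$ and $\Span(\U(A))=\mathbb F_{2}\cdot\mathbf 1_{A}$; being unit-generated then forces $A\cong\mathbb F_{2}$.

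Now assume $|\mathbb F|\ge 3$. For (1), take $A=\mathbb F[x,x^{-1}]$, whose units are the monomials $\alpha x^{n}$ with $\alpha\in\mathbb F^{\bullet}$ and $n\in\mathbb Z$; every Laurent polynomial is a sum of such monomials, so $A$ is unit-generated. Choose $c\in\mathbb F^{\bullet}\setminus\{1\}$ (available because $|\mathbb F^{\bullet}|\ge 2$), and let $\Lambda$ be the unique linear functional on $A$ with $\Lambda(x^{2})=c$ and $\Lambda(x^{n})=1$ for $n\ne 2$. Then $\Lambda(\mathbf 1)=1$ and $\Lambda(\alpha x^{n})=\alpha\Lambda(x^{n})\in\mathbb F^{\bullet}$ for every unit $\alpha x^{n}$, yet $\Lambda(x\cdot x)=c\ne 1=\Lambda(x)\Lambda(x)$; so $A$ is unit-generated but not GK\.Z. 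For (2), take $A=\mathbb F\times\mathbb F$, a function algebra on a two-point set that is plainly not isomorphic to $\mathbb F$; for any $(r,s)\in A$, since $|\mathbb F|\ge 3$ the sets $\mathbb F^{\bullet}\setminus\{r\}$ and $\mathbb F^{\bullet}\setminus\{s\}$ are nonempty, so picking $a,b$ in them yields $(r,s)=(a,b)+(r-a,s-b)$ as a sum of two units of $A$.

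The main points that demand attention are that the functional $\Lambda$ constructed in part (1) be nonzero on every unit, which holds because every unit of $\mathbb F[x,x^{-1}]$ is a monomial and $\Lambda$ is nonzero on every basis element $x^{n}$; and that the hypothesis $|\mathbb F|\ge 3$ in part (2) is precisely what makes $\mathbb F^{\bullet}\setminus\{r\}$ nonempty for every $r\in\mathbb F$. Neither presents a genuine difficulty.
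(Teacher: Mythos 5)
Your proof is correct. Part (1) follows the paper's route exactly: the same computation over $\mathbb F_{2}$ (which you usefully spell out via the $nm$-term expansion of $ab$, where the paper only asserts multiplicativity from $A=\Span(\U(A))$), and the identical Laurent-polynomial counterexample $\mathbb F[x,x^{-1}]$ with the functional sending $x^{2}\mapsto c$ and all other $x^{n}\mapsto 1$. Where you genuinely diverge is the converse direction of part (2): the paper handles $|\mathbb F|\ge 3$ by invoking Theorem \ref{gkzimpgeun} (GK\.Z implies unit generated) together with the existence of GK\.Z function algebras not isomorphic to $\mathbb F$, an argument that is stated rather tersely and leans on earlier machinery; you instead exhibit the concrete function algebra $\mathbb F\times\mathbb F$ on a two-point set and verify by hand that every $(r,s)$ is a sum of two units, using only that $\mathbb F^{\bullet}\setminus\{r\}$ is nonempty when $|\mathbb F|\ge 3$. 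Your version is self-contained and more elementary, at the cost of being a one-off example rather than flowing from the general GK\.Z-implies-unit-generated principle that the paper is built around; both establish the claim.
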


\begin{proof}\
	
\begin{enumerate} 
	\item 	
Let $A$ be a unit generated algebra over $\mathbb F_{2}$.	If $\Lambda: A\to \mathbb F_{2}$ is a linear functional not vanishing on units, then $\Lambda(u)=1$ for every unit $u$ in $A$. Since $A=\Span(\U(A))$, we obtain that $\Lambda$ is multiplicative, implying that $A$ is a GK\.Z  algebra. 
	
	On the other hand, if $|\mathbb F|>2$, let $c\in \mathbb F\setminus \{0,1\}$. Consider the Laurent polynomial ring $\mathbb F[x, x^{-1}]$.  Let $\Lambda: \mathbb F[x,x^{-1}]\to \mathbb F$ be the unique linear functional over $\mathbb F$, such that $\Lambda(x^{n})=1$ for all integers $n\ne2$, and $\Lambda(x^{2})=c$. We see that  $\Lambda$ is not multiplicative since $\Lambda(x^{2})=c\ne 1=(\Lambda(x))^2$. Since $\U(\mathbb F[x,x^{-1}])=\mF^{\bullet}\{x^{n}\mid n\in \mathbb Z\}$ (a direct product of two groups), we see that $\Lambda$ does not vanish on units. Hence the $\mathbb F$-algebra $\mathbb F[x,x^{-1}]$ is unit generated, but it is not a GK\.Z algebra.
	
	\item
Let $A\subseteq \mathbb F_{2}^{X}$ be a function algebra. The only unit of $\mathbb F_{2}^{X}$ is $I_{X}$. Hence $\Span(\U(A))=\mathbb F_{2}I_{X}$. It follows that $A=\Span(\U(A)) \Leftrightarrow A=\mathbb F_{2}I_{X}$.	

On the other hand, if $|\mathbb F|>2$,  then every GK\.Z  algebra over $\mathbb F$ is unit generated by Theorem \ref{gkzimpgeun}, and by (1), there are GK\.Z  algebras $A$ that are not isomorphic to $\mathbb F$. 
\end{enumerate}
\end{proof}  

As shown by Vamos \cite[page 418]{Vam05}, each element of a real or complex Banach algebra is a sum of two units, as an immediate consequence of the spectral  theorem. Analogously, we have:

\begin{remark}
Let $A$ be an $\mathbb F$-algebra.
Let $a\in A$ such that $\rho(a)\ne\emptyset$. Then $a$ is a sum of at most $2$ units.
Hence, if every element of $A$ has a non-empty resolvent, then each element of $A$ is a sum of at most $2$ units.	
\end{remark} 

Indeed, let $\lambda\in \rho(a)$. If $\lambda=0$, then $a$ is invertible. If $\lambda\ne0$, then $a=(a-\lambda \mathbf 1)+\lambda \mathbf 1$,  is a sum of two units. \qed

\subsection{Linear coverings}\label{subsec.lincov}
	\begin{remark}\label{onesmax}	
	A one-sided  ideal $V$ of codimension $1$ of the algebra $A$ is a maximal ideal.
	\end{remark}  

Indeed, $V$ is multiplicatively closed, so $V$ is an ideal.  \qed

\begin{lemma} \label{fgid}
Let $V$ be a subspace $A$ of codimension $1$ that is not an ideal. Then there exists a finitely generated subspace $V_{0}$ of $V$ such that $AV_{0}=V_{0}A=A$.	
\end{lemma}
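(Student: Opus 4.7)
The plan is to exploit the fact that any subspace of codimension one is already a maximal subspace, together with the contrapositive of Remark~\ref{onesmax}, which tells us that a codimension-one subspace that fails to be a two-sided ideal is in fact neither a left ideal nor a right ideal.

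The first step is to show the global equality $AV = A$. Since $\mathbf 1_A\in A$, we have $V = \mathbf 1_A \cdot V \subseteq AV$, so $AV$ is a subspace of $A$ that contains $V$. Because $V$ is not a left ideal, there exist $a\in A$ and $v\in V$ with $av\notin V$, so $AV$ properly contains $V$. Maximality of $V$ (codimension one) then forces $AV = A$. Symmetrically, $VA = A$.

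Next, I would pass from this infinite-dimensional statement to a finite-dimensional one. From $AV = A$ we get an expression $\mathbf 1_A = \sum_{i=1}^{n} a_i v_i$ with $a_i\in A$ and $v_i\in V$. Set $V_1 = \mathrm{Span}(v_1,\dots,v_n)\subseteq V$. Then $\mathbf 1_A\in AV_1$, and since $AV_1$ is a left ideal, $AV_1 = A$. Applying the same reasoning on the right, one obtains a finite-dimensional $V_2\subseteq V$ with $V_2 A = A$.

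Finally, let $V_0 = V_1+V_2$, a finite-dimensional (hence finitely generated) subspace of $V$. Then $AV_0 \supseteq AV_1 = A$ and $V_0 A \supseteq V_2 A = A$, giving $AV_0 = V_0 A = A$. There is no serious obstacle here; the only conceptual point is recognising that ``not an ideal'' plus codimension one forces the failure of both one-sided ideal conditions (via Remark~\ref{onesmax}), after which the maximality of $V$ as a subspace does all the work.
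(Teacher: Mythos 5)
Your proof is correct and follows essentially the same route as the paper: invoke Remark~\ref{onesmax} to conclude $V$ is neither a left nor a right ideal, deduce $AV=VA=A$ from codimension one, and extract finitely generated $V_1,V_2\subseteq V$ whose sum serves as $V_0$. The only difference is that you spell out the finite-generation step via $\mathbf 1_A=\sum a_iv_i$, which the paper leaves implicit.
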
  

\begin{proof} 
By Remark \ref{onesmax}, $V$ is not a left ideal, so $V\subsetneqq AV$, implying that $AV=A$. Similarly, $VA=A$. Hence there exist finitely generated subspace $V_{1}$ and $V_{2}$ of $V$ such that $AV_{1}=V_{2}A=A$. Set $V_{0}=V_{1}\cup~V_{2}$.
\end{proof}

\begin{proposition}\label{linearav}
	Let $A$ be an $\mathbb F$-algebra. Then $A$ is a GK\.Z algebra under each of the following conditions:
	\begin{enumerate} 
		\item
		$A\setminus \U(A)$ is a contained in a union of at most two proper one-sided ideals.		
		
		\item 
		$\mathbb F$ is infinite,  and $A\setminus \U(A)$ is  contained in a union of less than $|\mathbb F|$ proper one-sided ideals.
		
		\item
		$\mathbb F$ is infinite,  and $A\setminus \U(A)$ is  contained in a union of finitely many proper one-sided ideals.
	\end{enumerate}
		\end{proposition}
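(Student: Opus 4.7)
The strategy is to use the formulation of the GK\.Z property noted just after Definition \ref{defGKZ}: it suffices to show that every codimension-$1$ subspace $V$ of $A$ not containing any unit is an ideal. Fix such a $V$ and suppose for contradiction it is not; by Remark \ref{onesmax}, $V$ is then not even a one-sided ideal. Under any of the three hypotheses we have a covering $V \subseteq A \setminus \U(A) \subseteq I_1 \cup \cdots \cup I_n$ by proper one-sided ideals, and intersecting with $V$ gives $V = \bigcup_{j=1}^{n}(V \cap I_j)$.

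The first step is the observation that each $V \cap I_j$ is a \emph{proper} $\mathbb{F}$-subspace of $V$: if $V \subseteq I_j$, then $\codim_A V = 1$ combined with $I_j \ne A$ forces $V = I_j$, making $V$ a one-sided ideal, contrary to the standing assumption. Thus $V$ is written as the union of $n$ proper subspaces of itself, and the problem reduces to a standard linear covering lemma.

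That covering lemma is the following: a vector space $W$ over $\mathbb{F}$ cannot be a union of $n$ proper subspaces when either $n \leq 2$ (over any field) or $n < |\mathbb{F}|$. For $n = 1$ this is trivial; for $n = 2$ there is the elementary argument that $w_1 + w_2$ lies in neither $W_1$ nor $W_2$ when $w_i \in W_i \setminus W_{3-i}$. For the general bound I would work with a minimal cover $W = W_1 \cup \cdots \cup W_n$, pick $v \in W_1 \setminus (W_2 \cup \cdots \cup W_n)$ and $w \in W \setminus W_1$; then for each $t \in \mathbb{F}^{\bullet}$ the vector $v + tw$ lies outside $W_1$, hence in some $W_j$ with $j \geq 2$, and two distinct scalars $t_1, t_2 \in \mathbb{F}^{\bullet}$ cannot land in the same $W_j$ (else $(t_1 - t_2)w \in W_j$ forces $w$ and then $v$ into $W_j$). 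This injection $\mathbb{F}^{\bullet} \hookrightarrow \{W_2, \ldots, W_n\}$ yields $|\mathbb{F}| \leq n$, contradicting $n < |\mathbb{F}|$.

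Combining the two parts: case (1) of the proposition falls under the $n \leq 2$ part of the covering lemma; case (2) is precisely the $n < |\mathbb{F}|$ part; and case (3) is a special case of (2), since any finite number is strictly less than the cardinality of an infinite field. I do not anticipate a serious obstacle — the only point requiring care is the small-$n$ case over $\mathbb{F}_2$, which needs the elementary two-subspace trick rather than the cardinality bound (the latter only yields $|\mathbb{F}| \leq 2$ there, and is not by itself a contradiction).
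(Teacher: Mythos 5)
Your reduction is sound: assuming $V$ is not an ideal, Remark \ref{onesmax} shows it is not even a one-sided ideal, each $V\cap I_j$ is then a proper subspace of $V$, and $V$ is written as a union of proper subspaces. This disposes of cases (1) and (3) correctly (for (1) the paper uses the same two-subgroup trick, viewing $A$ as an additive group; for (3) your irredundant-finite-cover argument with $v+tw$ is the standard proof of the finite covering bound). The problem is case (2), where the family of ideals may be infinite --- and this is precisely the case needed for the applications (e.g.\ Proposition \ref{loc}(3): countably many left ideals over $\mathbb R$ or $\mathbb C$). Your covering lemma is \emph{false} as stated for infinite families: a vector space of countably infinite dimension over any field, say $W=\bigoplus_{n\ge1}\mathbb F e_n$, is the union of the countably many proper subspaces $W_N=\Span(e_1,\dots,e_N)$, even though $\aleph_0<|\mathbb R|$. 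Correspondingly, your proof of the lemma breaks at the step ``pick $v\in W_1\setminus(W_2\cup\cdots\cup W_n)$'': for an infinite cover, minimality of the cardinality does not yield irredundancy (in the example every $W_N$ is contained in the union of the others), so such a $v$ need not exist. Since $V$ has codimension $1$ in $A$ and is in general infinite-dimensional, you cannot apply the covering bound to $V$ itself.

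The paper's proof of (2) contains exactly the extra idea needed to get around this. By Lemma \ref{fgid}, if $V$ is not an ideal there is a \emph{finitely generated} subspace $V_0\subseteq V$ with $AV_0=V_0A=A$. The covering theorem of Khare and Clark does hold for a finite-dimensional space against an arbitrary family of fewer than $|\mathbb F|$ proper subspaces (for instance because a hyperplane of $\mathbb F^d$ meets the moment curve $t\mapsto(1,t,\dots,t^{d-1})$ in at most $d-1$ points), so $V_0$ must lie in one of the ideals $I_j$; but then $A=AV_0\subseteq I_j$ (or $A=V_0A\subseteq I_j$), contradicting properness. Your argument for (2) therefore needs to be rerouted through such a finite-dimensional $V_0$; as written, that case is not proved.
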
   

\begin{proof}	
		Let $V$ be a subspace of $A$ of codimension $1$ that contains no units.
		\begin{enumerate} 
		\item 
		We have $V\subseteq A\setminus \U(A)\subseteq I_{1}\cup I_{2}$, where $I_{1}, I_{2}$ are two one-sided ideals of $A$, not necessarily distinct. Viewing $A$ as an  additive group, we obtain that $V\subseteq I_{k}$ for some $k=1,2$. Since $\codim V=1$, we see that $V=I_{k}$, so $V$ is an ideal by Remark \ref{onesmax}. Hence $A$ is a GK\.Z  algebra.
				
		\item 
		 By Lemma \ref{fgid}, there exists a finitely generated subspace $V_{0}$ of $V$ such that $AV_{0}=V_{0}A=A$. Since $V_{0}$ is  contained in a union of less than $|\mathbb F|$ proper one-sided ideals, it follows  from\cite[Theorem 1.2]{Khare09} or from \cite[Main Theorem]{Clark12},	 that $V_{0}$ is contained in one of the one-sided ideals $I$ in this union. Hence $V=I$, so $V$ is an ideal by Remark \ref{onesmax}, and $A$ is a GK\.Z  algebra. 
		 
		\item is a particular case of (2).
		\end{enumerate}
\end{proof}	

\begin{corollary} 
	Let $A\subseteq \mathbb F^{X}$ be a function algebra. Assume that $|X|<|\mathbb F|$, and that each $f\in A$ such that $f(x)\ne 0$ for all $x\in X$ is invertible in $A$. Then  $A$, is a GK\.Z-algebra. In particular, if $|X|<|\mathbb F|$, then $\mathbb F^{X}$ is a GK\.Z  algebra. More particularly, if $\mathbb F=\mathbb R, \mathbb C$, and $X$ is countable, then $A$ is a GK\.Z-algebra. 
\end{corollary}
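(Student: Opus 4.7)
The plan is to reduce to Proposition \ref{linearav}(2) by covering the non-units of $A$ with few proper ideals. For each $x\in X$, first I would observe that the projection $p_{x}:A\to \mathbb F$ is an $\mathbb F$-algebra homomorphism sending $\mathbf 1_{A}$ to $1$, hence surjective ($\mathbb F$-linearly, its image is an $\mathbb F$-subspace of $\mathbb F$ containing $1$). Therefore $M_{x}:=\ker p_{x}=\{f\in A:f(x)=0\}$ is a two-sided ideal of $A$ of codimension $1$, in particular a proper ideal.

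The key observation is then the covering
\[
A\setminus \U(A)\;\subseteq\;\bigcup_{x\in X}M_{x}.
\]
Indeed, if $f\in A$ lies outside every $M_{x}$, then $f(x)\ne 0$ for all $x\in X$, and the standing hypothesis forces $f\in \U(A)$; the contrapositive gives the inclusion. Since $|X|<|\mathbb F|$ and (as $X$ is nonempty and $|X|<|\mathbb F|$ with $X$ infinite forces $\mathbb F$ infinite) Proposition \ref{linearav}(2) applies, yielding that $A$ is a GK\.Z algebra.

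For the subsequent special cases, the hypothesis is automatic for $A=\mathbb F^{X}$, because the pointwise reciprocal of any never-vanishing function in $\mathbb F^{X}$ is its inverse in $\mathbb F^{X}$. For the last clause, when $\mathbb F=\mathbb R$ or $\mathbb C$ and $X$ is countable, we have $|X|\le \aleph_{0}<2^{\aleph_{0}}=|\mathbb F|$, so the cardinality assumption is met and the previous specialisation applies.

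There is no serious obstacle here; the argument is essentially a packaging of Proposition \ref{linearav}(2). The only point requiring a moment's attention is the justification that each $M_{x}$ is proper, which follows from the codimension-one observation above, and the verification that the hypothesis $|X|<|\mathbb F|$ indeed matches the cardinality assumption of Proposition \ref{linearav}(2).
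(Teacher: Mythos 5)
Your argument is correct and is essentially identical to the paper's proof: the paper likewise covers $A\setminus \U(A)$ by the ideals $p_{x}^{-1}(0)$ for $x\in X$ and invokes Proposition \ref{linearav}. You merely make explicit two details the paper leaves implicit, namely that each $\ker p_{x}$ is a proper (codimension-one) ideal and that the cardinality hypothesis of Proposition \ref{linearav}(2) is satisfied (noting that your parenthetical only guarantees $\mathbb F$ is infinite when $X$ is infinite, but the remaining finite-$X$ cases are equally glossed over in the paper's own one-line proof).
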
  

\begin{proof} 
	We have $A\setminus \U(A)\subseteq \bigcup_{x\in X}p_{ x}^{-1}(0)$. Hence $A$ is a GK\.Z algebra by Proposition \ref{linearav}.	
\end{proof}  

\begin{proposition}\label{loc} 
Let $A$ be an algebra over a field $\mathbb F$.

\begin{enumerate} 
	\item	
	If $A$ has at most two maximal left (or at most two maximal right) ideals, then $A$ satisfies the GK\.Z property.
	
	\item
		Let $A$ be an algebra over an infinite field $\mathbb F$. If the cardinality of the set of left  ideals, or of the right ideals, is $<|\mathbb F|$, then $A$ is a GK\.Z algebra. 
		
	\item
	If  the field $\mathbb F=\mathbb R$ or $\mathbb C$, and $A$ has just countably many left ideals, then $A$ is a GK\.Z algebra.
	
	\item 
If $A$ is commutative, then the localisation $A_{P}$ is a GK\.Z-algebra for every prime ideal $P$ of $A$. Hence the GK\.Z property is not a local-global property.
	
	\item 
Assume that $A$ is commutative and that the field $\mathbb  F$ is  infinite. Let $\mathcal P$ be a set of less that $|\mathbb F|$ prime ideals. Then
the algebra $B=\bigcap_{P\in\mathcal P}A_{P}$ satisfies the GK\.Z property.	
\end{enumerate}
\end{proposition}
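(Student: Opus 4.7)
The plan is to deduce all five parts from Proposition \ref{linearav}: for each claim I exhibit a covering of the non-unit set $A\setminus\U(A)$ (or $B\setminus\U(B)$) by proper one-sided ideals of the appropriate cardinality, and then the relevant case of Proposition \ref{linearav} delivers the GK\.Z property.

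For (1)--(3) the starting observation is that any non-unit $a\in A$ must fail to be left- or right-invertible, so that $Aa$ or $aA$ is a proper one-sided ideal and hence $a$ sits in some maximal left or right ideal. Thus $A\setminus\U(A)$ is contained in the union of all maximal one-sided ideals of $A$. Under hypothesis (1) --- at most two maximal left (or right) ideals --- this covering has at most two members, and Proposition \ref{linearav}(1) closes the argument. Under hypothesis (2) --- fewer than $|\mathbb F|$ left (or right) ideals in total --- in particular there are fewer than $|\mathbb F|$ maximal such ideals, and Proposition \ref{linearav}(2) applies. Part (3) is the specialisation $\mathbb F\in\{\mathbb R,\mathbb C\}$ with countably many left ideals.

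For (4) the localisation $A_P$ is a commutative local ring whose unique maximal ideal is $PA_P$, so $A_P\setminus\U(A_P)=PA_P$ is a single proper ideal and Proposition \ref{linearav}(1) applies with $I_1=I_2=PA_P$. The ``not local-global'' assertion is then witnessed by any commutative non-GK\.Z algebra whose localisations are local: the Laurent polynomial ring $\mathbb F[x,x^{-1}]$ from Proposition \ref{notgkz} has localisations at every prime that are local, hence GK\.Z, while the ring itself fails the GK\.Z property. For (5), working inside the total ring of fractions of $A$, I first verify that $b\in B$ is a unit of $B$ precisely when $b$ is a unit of every $A_P$ (so that $b^{-1}$ lies in each $A_P$, and hence in $B$). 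Consequently $B\setminus\U(B)\subseteq\bigcup_{P\in\mathcal P}(B\cap PA_P)$, a union of fewer than $|\mathbb F|$ proper prime ideals of $B$, and Proposition \ref{linearav}(2) concludes.

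The main technical obstacle lies in parts (1)--(3) in the noncommutative setting: a left-invertible-but-not-right-invertible element sits in some maximal right ideal yet in no maximal left ideal, so a priori it could escape a covering by left ideals alone. I would address this by first reducing to a finitely generated $V_0\subseteq V$ with $AV_0=V_0A=A$ via Lemma \ref{fgid}, and then applying the vector-space covering results of Khare and Clark to the finite-dimensional $V_0$ just as in the proof of Proposition \ref{linearav}; the two-sided generation $AV_0=V_0A=A$ is what rules out a mixed covering and forces $V_0$, and hence $V$, into a single maximal one-sided ideal, which is then necessarily an ideal by Remark \ref{onesmax}.
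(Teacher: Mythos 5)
Your overall route is the paper's: everything is reduced to Proposition \ref{linearav}, with (4) handled by the locality of $A_P$ and (5) by covering the non-units of $B$ by the ideals $PA_P$. Parts (4) and (5) match the paper's one-line proofs and are correct; your explicit witness $\mathbb F[x,x^{-1}]$ for the failure of the local-global property is the right one and is consistent with the paper's earlier example.

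The problem is in your treatment of (1)--(3). You correctly isolate the obstacle --- a left-invertible non-unit lies in no maximal left ideal, so $A\setminus\U(A)$ need not be covered by the maximal left ideals alone --- but your proposed fix does not address it. Re-running Lemma \ref{fgid} and the Khare--Clark covering theorem on $V_0$ presupposes exactly what is missing, namely a covering of $V_0$ by fewer than $|\mathbb F|$ proper one-sided ideals: the two-sided generation $AV_0=V_0A=A$ only converts such a covering into a contradiction, it does not produce one, and the left-invertible non-units are precisely the elements for which the hypothesis gives no bound on the number of (right) ideals needed. The gap is closed instead by showing that the hypotheses force every one-sided invertible element to be a unit, after which your covering by maximal left ideals is legitimate. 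For (1): a ring with finitely many maximal left ideals has $A/\Jac(A)$ semisimple, hence Dedekind-finite, and then $A$ itself is Dedekind-finite. For (2) and (3): if $ba=\mathbf 1\ne ab$, the matrix units $e_{ij}=a^{i-1}(\mathbf 1-ab)b^{j-1}$ yield pairwise distinct left ideals $A(e_{11}+\lambda e_{12})$, $\lambda\in\mathbb F$ (every $y$ in this ideal satisfies $ye_{21}=\lambda\, ye_{11}$, which pins down $\lambda$), so $A$ has at least $|\mathbb F|$ left ideals, contradicting the cardinality hypothesis. To be fair, the paper's own proof merely asserts that (1)--(3) ``immediately follow'' from Proposition \ref{linearav} and is silent on this point as well; but since you raised the obstacle, your resolution of it has to actually work, and as written it is circular.
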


\begin{proof}
	\
	 
	Items (1), (2) and (3) immediately follow from Proposition \ref{linearav}.
	
	\begin{enumerate} \setcounter{enumi}3
		
	\item $A_{P}$ has just one maximal ideal, so it a GK\.Z-algebra by (1).
	
	\item
	This follows from Proposition \ref{linearav} since the set of non-units of the algebra  $B$ is contained in $\bigcup_{P\in\mathcal P}PA_{P}$.	
\end{enumerate}
\end{proof}

\subsection{The GK\.Z property for homomorphic images}\label{subsec.GKZimg}
We show first that the class of GK\.Z algebras is closed under homomorphic images (Proposition \ref{homimg}). Equivalently, if $I$ is an ideal of a GK\.Z algebra $A$, then also $A/I$ is a GK\.Z algebra. Then we deal with the converse, and show that under suitable assumptions, if $A/I$ is a GK\.Z algebra for a certain ideal $I$, then $A$ is a GK\.Z algebra.

\begin{proposition} \label{homimg}
	A homomorphic image of a GK\.Z algebra is a GK\.Z algebra.	
\end{proposition}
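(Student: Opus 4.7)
The plan is to prove the contrapositive-free statement directly: given a surjective algebra homomorphism $\pi : A \to B$ with $A$ a GK\.Z algebra, pull back any candidate functional on $B$ to a candidate functional on $A$, use the GK\.Z hypothesis for $A$, and then push the multiplicativity back down using surjectivity.

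Concretely, if $B$ is the zero algebra there is nothing to prove (zero algebras are GK\.Z by convention), so assume $B\ne 0$. Let $\Lambda : B\to \mathbb F$ be a linear functional with $\Lambda(\mathbf 1_B)=1$ that does not vanish on $\U(B)$. Set
\[
\widetilde\Lambda := \Lambda\circ\pi : A\to \mathbb F.
\]
Clearly $\widetilde\Lambda$ is linear. Since $\pi$ is a unital homomorphism (a surjective algebra homomorphism sends the unit to the unit), $\widetilde\Lambda(\mathbf 1_A)=\Lambda(\mathbf 1_B)=1$. Moreover, for every $u\in \U(A)$, the element $\pi(u)$ lies in $\U(B)$ because $\pi(u)\pi(u^{-1})=\pi(u^{-1})\pi(u)=\pi(\mathbf 1_A)=\mathbf 1_B$; therefore $\widetilde\Lambda(u)=\Lambda(\pi(u))\ne 0$.

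Thus $\widetilde\Lambda$ satisfies the GK\.Z hypothesis on $A$, and so is multiplicative. That is, for all $a,a'\in A$,
\[
\Lambda(\pi(a)\pi(a')) \;=\; \Lambda(\pi(aa')) \;=\; \widetilde\Lambda(aa') \;=\; \widetilde\Lambda(a)\widetilde\Lambda(a') \;=\; \Lambda(\pi(a))\Lambda(\pi(a')).
\]
Given any $b,b'\in B$, surjectivity of $\pi$ furnishes preimages $a,a'\in A$ with $\pi(a)=b$, $\pi(a')=b'$, and the displayed identity then yields $\Lambda(bb')=\Lambda(b)\Lambda(b')$. Hence $\Lambda$ is multiplicative on $B$, so $B$ is a GK\.Z algebra.

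There is no real obstacle: the only things to check are that $\pi$ preserves the unit and maps units to units, both of which are formal properties of a surjective unital algebra homomorphism. The proof has no free parameters and needs no hypothesis on the field size, in contrast with the results of \S\ref{subsec.genunits}.
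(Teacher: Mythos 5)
Your proof is correct and is essentially the paper's argument in dual form: where you pull back the functional $\Lambda$ along the surjection and check that units map to units, the paper pulls back the codimension-one subspace $\ker\Lambda$ and checks that it contains no units of $A$ --- the two formulations are interchangeable by the remark following Definition \ref{defGKZ}. The only cosmetic difference is that you work with an arbitrary surjective homomorphism while the paper reduces at once to a quotient $A/I$; nothing is gained or lost either way.
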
  

\begin{proof}	
	We have to show that if $I$ is an ideal of a GK\.Z  algebra $A$, then $A/I$ is a GK\.Z algebra.	Let $W$ be a subspace of $A/I$ of codimension $1$ that contains no units. Hence $W=V/I$, where $V$ is a subspace of $A$ of codimension $1$, that contains $I$, but no units modulo $I$. We infer that  $V$ contains no units of $A$, implying that $V$ is an ideal of $A$, since $A$ is a GK\.Z  algebra. Hence  $W=V/I$ is an ideal of $A/I$. It follows that $A/I$ is a GK\.Z algebra.
\end{proof}

 For a partial converse of Proposition \ref{homimg}, see the next Proposition \ref {A/I}. Recall that if $I$ is an ideal of $A$, an element $t\in A/I$ is {\em liftable} to a unit in $A$ if $t$ contains a unit in $A$, that is, $t+I=u+I$ for some $u\in \U(A)$.

\begin{proposition} \label{A/I}
	Let $A$ be an algebra over $\mathbb F$.
	\begin{enumerate} 
		\item 
		Let $I$ be an ideal of  $A$. Then $A/I$ is a GK\.Z algebra  $\Leftrightarrow$  each maximal subspace of $A$ containing $I$, but no units modulo $I$, is an ideal. 			
		
		\item 
		If each unit of $A/I$ is liftable to $A$, then  $A$ is a GK\.Z algebra  $\Leftrightarrow A/I$  is a GK\.Z algebra. 	
	\end{enumerate}	
\end{proposition}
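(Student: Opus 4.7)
\emph{Plan.} The unifying tool for both parts is the standard lattice isomorphism between subspaces of $A$ containing $I$ and subspaces of $A/I$, given by $V\mapsto V/I$ with inverse $W\mapsto \pi^{-1}(W)$, where $\pi\colon A\to A/I$ is the quotient map. This bijection preserves codimension and carries ideals to ideals; moreover, it translates ``$W$ contains no units of $A/I$'' into ``$V$ contains no units modulo $I$,'' because $v+I$ is a unit in $A/I$ precisely when $v$ is a unit modulo $I$, and $V/I=\{v+I:v\in V\}$.

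Part~(1) is then an immediate translation of the GK\.Z characterization via this correspondence. The condition ``$A/I$ is GK\.Z'' reads ``every codim-$1$ subspace $W$ of $A/I$ without units is an ideal,'' which, upon writing $W=V/I$ for a codim-$1$ subspace $V\supseteq I$ of $A$, is exactly the stated equivalent.

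For part~(2), the direction $(\Rightarrow)$ is immediate from Proposition~\ref{homimg}. For the converse, assume $A/I$ is GK\.Z and each unit of $A/I$ admits a unit lift in $A$. Let $V$ be a codim-$1$ subspace of $A$ containing no units of $A$; the plan is to establish (a)~$I\subseteq V$, and (b)~$V$ contains no units modulo $I$, after which part~(1) immediately yields that $V$ is an ideal.

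Once (a) is granted, (b) is a one-line consequence of lifting: if $v\in V$ with $v+I\in U(A/I)$, pick $u\in U(A)$ with $u+I=v+I$; then $u-v\in I\subseteq V$, so $u=v+(u-v)\in V$, contradicting the hypothesis that $V$ contains no units. The substantive step, and the main obstacle, is (a). My approach is by contradiction: if $I\not\subseteq V$, then $V+I=A$ since $V$ has codimension $1$, so $V$ meets the coset $\mathbf{1}_A+I$ at some $v\in V$, giving $v+I=\mathbf{1}_{A/I}\in U(A/I)$; combining the lifting hypothesis with the fact that $V+I=A$ makes $V$ meet every coset of $I$, I would produce a unit of $A$ actually lying in $V$, contradicting the standing hypothesis on $V$ and forcing $I\subseteq V$.
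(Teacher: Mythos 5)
Your treatment of part (1) is correct and is essentially the paper's argument: the codimension-preserving correspondence $V\mapsto V/I$ between subspaces of $A$ containing $I$ and subspaces of $A/I$, under which ``$V$ contains no units modulo $I$'' matches ``$V/I$ contains no units of $A/I$''. In part (2), the direction $(\Rightarrow)$ via Proposition \ref{homimg} is fine, and your step (b) --- once $I\subseteq V$ is granted, liftability shows that $V$ contains no units modulo $I$ --- is exactly the observation the paper uses.

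The gap is precisely where you place it, in step (a), and it cannot be closed: a maximal subspace $V$ of $A$ without units need \emph{not} contain $I$. Your sketch produces, for a unit of $A/I$, an element $v\in V$ and a unit $u\in \U(A)$ with $u-v\in I$; but in the case $I\not\subseteq V$ under consideration, $u-v\in I$ does not force $u\in V$, so no contradiction appears. In fact the implication $(\Leftarrow)$ of part (2) is false as literally stated. Take $A=\mathbb C[x,x^{-1}]\times\mathbb C$ and $I=\mathbb C[x,x^{-1}]\times\{0\}$. Then $A/I\cong\mathbb C$ is a GK\.Z algebra, and every unit $\mu\in\mathbb C^{\bullet}$ of $A/I$ lifts to the unit $(1,\mu)$ of $A$. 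Yet $A$ is not a GK\.Z algebra: the linear functional $\Lambda(f,c)=\Lambda_{0}(f)$, where $\Lambda_{0}(x^{n})=1$ for $n\ne2$ and $\Lambda_{0}(x^{2})=2$, satisfies $\Lambda(\mathbf 1_{A})=1$, takes the nonzero value $\lambda$ or $2\lambda$ on each unit $(\lambda x^{n},\mu)$, but is not multiplicative since $\Lambda\bigl((x,1)^{2}\bigr)=2\ne1$. Its kernel is a maximal subspace without units that is not an ideal and does not contain $I$. Note that the paper's own one-line proof of (2) has the same lacuna --- it only verifies that the maximal subspaces \emph{containing} $I$ and without units are ideals --- so you correctly isolated the problematic step, but it is not surmountable; the repaired statements are Proposition \ref{Jac}, where $I\subseteq\Jac(A)$ forces every such $V$ to contain $I$, and Proposition \ref{avoide}, where the subspaces not containing $I$ are handled by an additional hypothesis.
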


\begin{proof}\
\begin{enumerate} 
	\item 
			$\Rightarrow$ 
	
	See Proposition \ref{homimg}.		
	
			$\Leftarrow$
		
		Let $V$ be  a maximal subspace of $A$ containing $I$, but no units modulo $I$. Thus $V/I$ is a maximal subspace of $A/I$ that contains no units. Hence $V/I$ is an ideal of $A/I$, implying that $V$ is an ideal of $A$.	
	\item follows from (1) since in this case a subspace of $A$ containing $I$, contains no units in $A$ if and only if it contains no units modulo $I$.	
\end{enumerate}		
	\end{proof} 

 By Proposition \ref{loc} (1), if $A$ is a unital algebra with a unique maximal left ideal $J$, then $A$ satisfies the GK\.Z property. For another generalisation, see Proposition \ref{Jac} (2) below. Indeed,  in this case  $J=\Jac(A)$, the Jacobson radical of $A$. Recall that $\Jac(A)$ is the intersection of all maximal left ideals of $A$, and that in this definition `left ' can be changed to `right', so $\Jac(A)$ is a two-sided ideal. For the Jacobson radical theory, see e.g., \cite[section 13B]{IsaacsAlg}, \cite[Part II, \S 1]{KaplFiRings},  and \cite[Chapter 2.4]{Lam01}. 
 
 \begin{proposition}
  Each maximal subspace $V$ of $A$ without units contains $\Jac(A)$, so $\Jac(A)$ is the intersection of all maximal subspaces of $A$ without  units.	
\end{proposition}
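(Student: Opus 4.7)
The plan is to establish the first assertion—that every maximal subspace $V$ of $A$ without units contains $\Jac(A)$—by a short contradiction argument using the fundamental unit property of the Jacobson radical; the containment $\Jac(A)\subseteq\bigcap_{V}V$ asserted by the ``so'' clause then follows formally by intersecting over all such $V$.

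The only input needed is the standard fact (see, e.g., \cite[Chapter 2.4]{Lam01}) that $\mathbf{1}_{A}-r\in\U(A)$ for every $r\in\Jac(A)$. Suppose, towards a contradiction, that $V$ is a subspace of codimension $1$ (equivalently, by the earlier remark, a maximal subspace) with $V\cap\U(A)=\emptyset$, and yet there exists some $j\in\Jac(A)\setminus V$. Because $V$ has codimension $1$ and $j\notin V$, one obtains the direct sum decomposition $A=V\oplus\mathbb{F}j$. In particular, $\mathbf{1}_{A}$ admits a unique expression $\mathbf{1}_{A}=v+\alpha j$ with $v\in V$ and $\alpha\in\mathbb{F}$, and rearranging gives $v=\mathbf{1}_{A}-\alpha j$. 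Since $\Jac(A)$ is an ideal of $A$, the element $\alpha j$ lies in $\Jac(A)$, so $v\in\U(A)$ by the quoted unit property—contradicting $v\in V$ together with $V\cap\U(A)=\emptyset$.

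There is no substantive obstacle here; the only ingredients beyond the direct-sum decomposition of $A$ along $V$ and $\mathbb{F}j$ are that $\Jac(A)$ is an ideal and satisfies the defining unit property $\mathbf{1}_{A}+\Jac(A)\subseteq\U(A)$. Intersecting the first assertion over all maximal subspaces $V$ without units then yields the ``so'' clause, completing the proposition.
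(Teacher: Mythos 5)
Your proof is correct and rests on exactly the same key fact as the paper's, namely that $\mathbf 1_{A}-r\in\U(A)$ for every $r\in\Jac(A)$ (the paper packages this as $\sigma(c)=\{0\}$ for $c\in\Jac(A)$). The paper argues dually, taking the unique linear functional $\Lambda$ with $\ker\Lambda=V$ and $\Lambda(\mathbf 1)=1$ and observing $\Lambda(c)\in\sigma(c)=\{0\}$; your decomposition $\mathbf 1_{A}=v+\alpha j$ along $A=V\oplus\mathbb{F}j$ is the kernel-side version of that same computation, so the two proofs are essentially identical (and, like the paper, you only establish the inclusion $\Jac(A)\subseteq\bigcap_{V}V$ for the ``so'' clause).
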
  	

\begin{proof} 
	There exists a unique linear functional $\Lambda$ on $A$ such that $\ker \Lambda=V$ and $\Lambda(\mathbf 1)=1$. Let $c\in \Jac(A)$. We have $\Lambda(c)\in \sigma(c)=\{0\}$. Hence $\Lambda(c)=0$, so $c\in V$.
\end{proof}  
 
 	\begin{proposition}\label{Jac} 
 		If $I$ is an ideal contained in $\Jac(A)$, then $A$ is a GK\.Z algebra  $\Leftrightarrow A/I$ is a GK\.Z algebra.
  	\end{proposition}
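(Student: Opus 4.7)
The plan is to derive this as an application of Proposition \ref{A/I}(2). By that proposition, it suffices to show that, when $I\subseteq \Jac(A)$, every unit of $A/I$ lifts to a unit of $A$ (i.e., every coset representing a unit in $A/I$ already contains a unit of $A$).

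First I would recall the standard characterisation of the Jacobson radical: for any $j\in \Jac(A)$, the element $\mathbf 1-j$ (equivalently, $\mathbf 1+j$) is a unit in $A$. This is a basic property of $\Jac(A)$, available in the references cited just before the statement.

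Next, given $u\in A$ whose image $\bar u\in A/I$ is a unit, choose $v\in A$ with $\bar v\bar u=\bar u\bar v=\bar{\mathbf 1}$. Then $uv=\mathbf 1+i_{1}$ and $vu=\mathbf 1+i_{2}$ for some $i_{1},i_{2}\in I\subseteq \Jac(A)$. By the fact recalled above, $uv$ and $vu$ are both units of $A$. From $uv\in \U(A)$ we get that $u$ has a right inverse in $A$, and from $vu\in \U(A)$ that $u$ has a left inverse; hence $u\in \U(A)$. Thus $u$ itself serves as a unit lift of $\bar u$, so the liftability hypothesis of Proposition \ref{A/I}(2) is met, and the equivalence $A$ is GK\.Z $\Leftrightarrow A/I$ is GK\.Z follows at once.

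There is no real obstacle here; the only subtlety is being careful that we need two-sided invertibility of $u$, which requires using both $uv$ and $vu$ rather than only one of them (the algebra is not assumed commutative). Everything else is a direct invocation of Proposition \ref{A/I}(2) together with the characterisation of $\Jac(A)$.
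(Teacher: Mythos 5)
Your proof is correct and follows essentially the same route as the paper: both reduce to Proposition \ref{A/I}(2) via liftability of units modulo $I\subseteq\Jac(A)$. You merely spell out the standard Jacobson-radical argument (that $uv=\mathbf 1+i_1$ and $vu=\mathbf 1+i_2$ are units, forcing $u\in\U(A)$) which the paper states without proof.
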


\begin{proof}	
	This follows from Proposition \ref{A/I}, since $I\subseteq \Jac(A)$, so an element of $A$ is a unit in $A$ if and only if it is a unit modulo $I$.
\end{proof}    

\begin{proposition}\label{avoide} 
	Let $I$ be an ideal of the algebra $A$. Assume that each maximal subspace of $A$ that does not contain units and does not contain $I$, is an ideal in $A$. Equivalently, assume that if $\Lambda$ is a linear functional such that $\Lambda(u)\ne0$ for all $u\in\U(A)$, $\Lambda(\mathbf 1_{A})=1$ and $\Lambda(I)\ne(0)$, then $\Lambda$ is  multiplicative. Then
	$A$ is a GK\.Z algebra $\Leftrightarrow A/I$ is a GK\.Z algebra.
\end{proposition}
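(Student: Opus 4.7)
The forward implication, that $A$ being GK\.Z implies $A/I$ being GK\.Z, is immediate from Proposition \ref{homimg} applied to the canonical projection $A \to A/I$.

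For the backward direction, suppose $A/I$ is a GK\.Z algebra, and let $\Lambda\colon A\to\mathbb F$ be a linear functional with $\Lambda(\mathbf 1_A)=1$ and $\Lambda(u)\ne 0$ for every $u\in\U(A)$. I wish to prove that $\Lambda$ is multiplicative, splitting on the value of $\Lambda(I)$. If $\Lambda(I)\ne(0)$, the equivalent form of the hypothesis directly yields multiplicativity. If $\Lambda(I)=0$, then $\Lambda$ descends through the projection $\pi\colon A\to A/I$ to a linear functional $\bar\Lambda\colon A/I\to\mathbb F$ with $\bar\Lambda(\mathbf 1_{A/I})=1$, and it suffices to show $\bar\Lambda$ does not vanish on any unit of $A/I$. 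Once this is established, $A/I$ being GK\.Z forces $\bar\Lambda$ to be multiplicative, and hence $\Lambda=\bar\Lambda\circ\pi$ is too.

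To prove the non-vanishing of $\bar\Lambda$ on $\U(A/I)$, suppose for contradiction that $\bar\Lambda(u+I)=0$ for some $u+I\in\U(A/I)$. Picking $u'\in A$ with $uu'-\mathbf 1_A\in I$, we compute $\Lambda(uu')=\Lambda(\mathbf 1_A)+\Lambda(uu'-\mathbf 1_A)=1$ while $\Lambda(u)=0$, so $\Lambda$ fails multiplicativity at the pair $(u,u')$. My plan is to upgrade this failure into a contradiction with the hypothesis by constructing a perturbed functional $\Psi=\Lambda+\mu$ that still meets the hypothesis's preconditions---$\Psi(\mathbf 1_A)=1$, $\Psi(I)\ne(0)$, and $\Psi(v)\ne 0$ for all $v\in\U(A)$---while remaining non-multiplicative at $(u,u')$. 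A natural attempt is to choose $\mu$ satisfying $\mu(\mathbf 1_A)=0$, $\mu(u)=\mu(uu')=0$, $\mu\equiv 0$ on $\U(A)$, and $\mu(i)\ne 0$ for some $i\in I$; then $\Psi$ respects the preconditions, and $\Psi(u)\Psi(u')=0\ne 1=\Psi(uu')$ yields the required contradiction.

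The main obstacle is the existence of such a $\mu$: this construction demands an element of $I$ lying outside $\Span(\U(A)\cup\{u,uu'\})$. In the subtle subcase where $I$ is contained in this span (which occurs, for example, when $A$ is generated by units), a different strategy is needed; one must either exploit another pair of elements at which $\Lambda$ is non-multiplicative, or tune $\mu$ to take carefully coordinated non-zero values on $\{u,u',uu'\}$ while keeping $\Lambda+\mu$ non-vanishing on every element of $\U(A)$. Ensuring compatibility in this edge case is where I expect the bulk of the work to lie.
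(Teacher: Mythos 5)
Your forward direction and the case $\Lambda(I)\ne(0)$ are exactly the paper's argument (the paper phrases the latter in terms of maximal subspaces: if $V=\ker\Lambda$ does not contain $I$, the hypothesis applies directly). The problem is the case $\Lambda(I)=0$, and you have correctly put your finger on the crux: one must show that the induced functional $\bar\Lambda$ on $A/I$ does not vanish on $\U(A/I)$, equivalently that $V=\ker\Lambda$ contains no element that is a unit \emph{modulo} $I$. Since units of $A/I$ need not lift to units of $A$, this does not follow from $V\cap\U(A)=\emptyset$. Your perturbation $\Psi=\Lambda+\mu$ requires a vector of $I$ outside $\Span\bigl(\U(A)\cup\{u,uu'\}\bigr)$, and the subcase you defer is not an edge case but the generic one: by Theorem \ref{gkzimpgeun}, any candidate GK\.Z algebra over a field with at least three elements that admits a codimension-one ideal satisfies $\Span(\U(A))=A$, so in precisely the situations of interest your $\mu$ is forced to be identically zero and the strategy cannot be carried out. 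As submitted, the proof is therefore incomplete.

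You should know, however, that the paper's own proof does not resolve this point either: it takes a maximal subspace $V$ avoiding $\U(A)$, notes that if $V$ is not an ideal then $I\subseteq V$ by the hypothesis, and then declares that $V/I$ is a non-ideal maximal subspace of $A/I$, contradicting the GK\.Z property of $A/I$. Invoking that property requires knowing that $V/I$ contains no units of $A/I$ --- exactly the claim you could not establish --- and the paper asserts it tacitly. So your proposal follows the same route as the paper and stalls at the same step; the difference is that you flagged the step. To make either argument airtight one needs an additional ingredient, e.g.\ the liftability of units of $A/I$ as in Proposition \ref{A/I}(2), or the assumption $I\subseteq\Jac(A)$ as in Proposition \ref{Jac}, or a direct verification in the intended applications that a maximal subspace containing $I$ but no units of $A$ also contains no units modulo $I$.
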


 \begin{proof} 
 	In view of Proposition \ref{homimg}, we have to prove just the implication $\Leftarrow$. Let $V$ be a  maximal subspace of $A$ that does not contain units and does not contain $I$. If $V$ is not an ideal, then $I\subseteq V$ and $V/I$ is not an ideal in $A/I$, a contradiction. Thus every maximal subspace of $A$ is an ideal, so $A$ is a GK\.Z algebra.
 \end{proof}   

\begin{remark}\label{emptyspec}
If  $A$ contains an element $a$ with empty spectrum, then $A$ is GK\.Z algebra vacuously. 	
\end{remark} 

Indeed, if $\Lambda$ is a linear functional that does vanish on units such that $\Lambda(\mathbf 1)=1$, then $\Lambda(a)\in \sigma(a)=\emptyset$, a contradiction. \qed  

\begin{proposition}\label{gkzlift}
Let $I$ be an ideal of a generated  by units algebra $A$ satisfying the following two conditions:	
\begin{enumerate} 
	\item
	There exists an element $c\in A$ such that $c+\lambda{\mathbf 1}$ is invertible modulo $I$ for all $\lambda\in \mathbb F$.

	\item 
	Each unit in $A/I$ is liftable to a unit in $A$.	
	\end{enumerate}

Then $A$ is a GK\.Z algebra.
\end{proposition}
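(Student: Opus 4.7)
The plan is to reduce everything to the quotient $A/I$ and to combine Remark \ref{emptyspec} with Proposition \ref{A/I}(2); the two numbered hypotheses are custom-made to furnish exactly the input of those two results.

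First I would unpack hypothesis (1). Write $\bar c = c + I \in A/I$. The assertion that $c + \lambda\mathbf 1$ is invertible modulo $I$ for every $\lambda \in \mathbb F$ means $\bar c + \lambda\bar{\mathbf 1}$ is a unit of $A/I$ for every $\lambda$. Replacing $\lambda$ by $-\lambda$ (a bijection of $\mathbb F$), this is the same as saying that $\bar c - \lambda\bar{\mathbf 1}$ is invertible in $A/I$ for every $\lambda\in\mathbb F$, i.e.\ $\rho_{A/I}(\bar c)=\mathbb F$ and hence $\sigma_{A/I}(\bar c)=\emptyset$.

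Next I would apply Remark \ref{emptyspec} to the algebra $A/I$: since $A/I$ contains an element with empty spectrum, it is a GK\.Z algebra (vacuously). Finally, hypothesis (2) is precisely the assumption of Proposition \ref{A/I}(2), so that proposition yields the equivalence ``$A$ is a GK\.Z algebra $\Leftrightarrow$ $A/I$ is a GK\.Z algebra''. Combining this with the previous step gives that $A$ is a GK\.Z algebra, as required.

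There is essentially no obstacle: the statement is a clean repackaging of Remark \ref{emptyspec} and Proposition \ref{A/I}(2), and the only point to verify carefully is the symmetric role of $\lambda$ and $-\lambda$ needed to identify condition (1) with emptiness of the spectrum of $\bar c$. The assumption that $A$ is generated by units does not appear to be used in this route; it presumably either serves to emphasise the typical setting of interest, or supports a more direct argument that avoids passing through Proposition \ref{A/I}(2).
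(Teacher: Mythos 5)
Your proposal is correct and is essentially identical to the paper's own proof: both deduce from hypothesis (1) that $c+I$ has empty spectrum in $A/I$, invoke Remark \ref{emptyspec} to conclude $A/I$ is a GK\.Z algebra, and then apply Proposition \ref{A/I}(2) via hypothesis (2). Your side observations (the harmless $\lambda\mapsto-\lambda$ relabelling, and the fact that the ``generated by units'' hypothesis is not actually needed in this argument) are also accurate; the paper's proof does not use that hypothesis either.
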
 

\begin{proof} 
	By (1), the element $c+I$ has an empty spectrum in $A/I$. Hence, $A/I$ is a GK\.Z algebra by Remark \ref{emptyspec}. By Proposition  \ref{A/I}, $A$ is a GK\.Z algebra.
\end{proof}  
 
\subsection{Unitisation}\label{subsec.unitis}
Recall that the {\em unitisation} of an $\mathbb F$-algebra $A$ (not necessarily unital) is the algebra $B=\mathbb F\oplus A$ (a direct sum of additive groups) with  multiplication defined by $(\lambda+a)(\mu +b)=\lambda\mu+\lambda b+\mu a+ ab$ for $\lambda,\mu\in \mathbb F$, and $a,b\in A$. We identify $\mathbb F$ with the subfield $\mathbb F\mathbf 1_{B}$ of $B$.

For the next proposition recall that if $I$ is an ideal of an algebra $A$, not necessarily unital,  then $J\subseteq \Jac(A)$  if and only if each element $a\in I$ is left  quasiregular, that is,  $1+a$ is left invertible in the unitisation of $A$, equivalently, there exists an element $b\in A$ such that $a+b+ab=0$ (see e.g., \cite{QuasiWiki21}). Moreover,  an ideal $I$ of $A$ is contained in $\Jac(A)$ if and only if each element of $I$ is left invertible. Here, `left' can be replaced by `right' or omitted (\cite{KaplFiRings} and \cite{IsaacsAlg}). Hence, a non zero non-unital algebra  $A$ is {\em radical} (that is, $A=\Jac(A)$), if and only if each element of $A$ is quasiregular.
 
\begin{lemma}\label{unitisrad}
	Let $A$ be a nonzero $\mathbb F$-algebra, not necessarily unital, and let $B$ the unitisation of $A$.  The following conditions are equivalent:
	
	\begin{enumerate} 
		\item 
		$A$ is radical algebra.		
	
		\item 
		$A=\Jac(B)$.
			
		\item
		$1+A\subseteq \U(B)$ (here $1\in \mathbb F\subseteq B$, so $1\notin A$).
		
		\item
		$\U(B)=\mathbb F^{\bullet}(1+A)$ (a direct product of multiplicative groups).
		
		\item
		$\U(B)=B\setminus A$.
	\end{enumerate}	
\end{lemma}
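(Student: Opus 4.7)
The plan is to run a cyclic argument in which the direct-sum structure of $B=\mathbb{F}\mathbf{1}_B\oplus A$ does essentially all of the work, together with the characterisation of $\Jac$ via quasiregular elements recalled just before the lemma. The key preliminary observation, to be used everywhere, is that $A$ is the kernel of the augmentation $B\to \mathbb{F}$; hence $A$ is a two-sided ideal of $B$ of codimension $1$, $B/A\cong \mathbb{F}$ is a field, and $A\cap\U(B)=\emptyset$ (because $\mathbf{1}_B\notin A$). In particular, the unique decomposition $b=\lambda\mathbf{1}_B+a$ of every $b\in B$ has $b\in A$ iff $\lambda=0$, and $b\in\U(B)$ forces $\lambda\neq0$.

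The equivalence (1)$\Leftrightarrow$(3) is then a direct restatement of the recalled fact: $A$ is radical iff every $a\in A$ is quasiregular, iff $1+a\in\U(B)$ for every $a\in A$. For (1)$\Leftrightarrow$(2), since $B/A\cong\mathbb{F}$ is a field, $A$ is a maximal ideal of $B$, so $\Jac(B)\subseteq A$; conversely, applying the recalled fact to the ideal $A$ of the unital algebra $B$, the inclusion $A\subseteq\Jac(B)$ is equivalent to every element of $A$ being quasiregular, i.e.\ to (1). Combining the two inclusions gives (1)$\Leftrightarrow$(2).

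For (3)$\Rightarrow$(4), the inclusion $\mathbb{F}^{\bullet}(1+A)\subseteq \U(B)$ holds because $\U(B)$ is a group containing both $\mathbb{F}^{\bullet}$ and (by (3)) $1+A$; conversely, any $b=\lambda+a\in\U(B)$ has $\lambda\neq 0$ by the preliminary observation, so $b=\lambda(1+\lambda^{-1}a)\in\mathbb{F}^{\bullet}(1+A)$, and the direct-product structure follows from $\mathbb{F}^{\bullet}\cap(1+A)=\{\mathbf{1}_B\}$, a consequence of uniqueness of the direct sum. The equivalence (4)$\Leftrightarrow$(5) is then a set-theoretic identity: since $\lambda(1+a')=\lambda+\lambda a'$ and $\mathbb{F}^{\bullet}A=A$, we have $\mathbb{F}^{\bullet}(1+A)=\mathbb{F}^{\bullet}+A=B\setminus A$. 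Finally, the cycle closes with (5)$\Rightarrow$(3), which is immediate: for $a\in A$, $1+a\in B\setminus A=\U(B)$. No step here poses a genuine obstacle; the only point requiring mild care is to remember that $\mathbf{1}_B$ lies in the $\mathbb{F}$-summand and not in $A$, so that the uniqueness of the direct-sum decomposition can be invoked without ambiguity.
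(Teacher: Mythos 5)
Your proposal is correct and follows essentially the same route as the paper: conditions (1)--(3) are linked through the quasiregularity characterisation of the Jacobson radical recalled just before the lemma, and conditions (3)--(5) through the decomposition $B=\mathbb{F}\oplus A$. The paper's proof is just a terser version of the same argument, so no further comparison is needed.
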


\begin{proof} 
We have: \\
 $A=\Jac(A) \Leftrightarrow $ each element of $A$ is quasiregular $ \Leftrightarrow  A=\Jac(B) \Leftrightarrow (3)$. 
 
 Thus the first three conditions are equivalent. Since  $B=\mF+A$, we see that the last three conditions are also equivalent.\end{proof}
	
\begin{proposition}\label{topUnitGKZ} 
The  unitisation of  a radical algebra is a GK\.Z algebra. 
Moreover,  $A$ is a radical algebra if there exists a Hausdorff topology on $A$  such that $\lim_{n\to\infty} a^{n}=0$ in this topology for all $a\in A$. 	
\end{proposition}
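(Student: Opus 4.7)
The proposition comprises two assertions that I would prove in succession. For the first, given a radical algebra $A$ with unitisation $B = \mathbb{F}\oplus A$, Lemma \ref{unitisrad} yields $A = \Jac(B)$; in particular $A$ is a two-sided ideal of $B$ contained in $\Jac(B)$. Proposition \ref{Jac} applied with $I = A$ then reduces the GK\.Z property of $B$ to that of the quotient $B/A$. As a vector space $B/A$ is canonically $\mathbb{F}$, i.e., a one-dimensional $\mathbb{F}$-algebra, on which the unique linear functional sending $\mathbf{1}$ to $1$ is the identity on $\mathbb{F}$; this is evidently multiplicative, so $B/A$ is a GK\.Z algebra trivially, and therefore so is $B$.

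For the second assertion, I need to show that under the topological hypothesis each $a \in A$ is quasiregular (Lemma \ref{unitisrad}, (1)$\Leftrightarrow$(3)). The natural candidate for a quasi-inverse is the Neumann-type partial sum $s_n = \sum_{k=1}^{n}(-a)^k \in A$, for which the telescoping calculation in $B$ yields the two-sided identity
\[
(1+a)(1+s_n) \;=\; 1 - (-a)^{n+1} \;=\; (1+s_n)(1+a).
\]
Since $a^{n+1}\to 0$ in the Hausdorff topology on $A$ (transferred to $B$ through the direct-sum decomposition $B = \mathbb{F}\oplus A$), the right-hand side converges to $\mathbf{1}_B$. Producing an actual quasi-inverse $b \in A$ then amounts to extracting a limit $b = \lim_n s_n$ and invoking continuity of multiplication on both sides; passing to the limit in the identity, with uniqueness of limits ensured by Hausdorffness, one obtains $(1+a)(1+b) = (1+b)(1+a) = 1$, as required.

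The main obstacle lies precisely in this passage to the limit: the bare hypothesis $a^n \to 0$ does not by itself force the partial sums $(s_n)$ to be Cauchy, so the argument implicitly appeals to additional compatibility of the topology with the algebra structure (joint continuity of multiplication) and a suitable completeness property allowing the Neumann series to converge. In the concrete settings of \S\ref{sec.distributions} --- the periodic distributions and the unitisation of the algebra of distributions supported in $(0,\infty)$ --- the native Fr\'echet topology supplies exactly these features, and the hypothesis $a^n\to 0$ then delivers both convergence of $(s_n)$ and the continuity needed to pass to the limit, so the proof goes through cleanly; this is the substantive content of the ``topological'' hypothesis.
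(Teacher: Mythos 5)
Your proof of the first assertion coincides with the paper's: Lemma \ref{unitisrad} gives $A=\Jac(B)$, and Proposition \ref{Jac} with $I=A$ reduces the GK\.Z property of $B$ to that of $B/A\cong\mathbb F$, which holds trivially. (The paper also records a second route via Proposition \ref{linearav}, using that $\U(B)=B\setminus A$ makes $A$ the unique maximal one-sided ideal of $B$; your argument is the first of the two the authors give.)

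For the second assertion you again follow the paper's intended argument (the Neumann series), and your diagnosis of the difficulty is exactly right --- and it applies verbatim to the paper's own proof, which asserts that the geometric series $\sum_{n=0}^{\infty}a^{n}$ converges and that its sum $s$ satisfies $s(1-a)=(1-a)s=1$, with no justification beyond $a^{n}\to0$ in a Hausdorff topology. Neither the convergence of the partial sums nor the passage to the limit in the product is available from Hausdorffness alone; in fact the assertion as literally stated is false. Take $A=x\mathbb C[x]$ (non-unital), whose unitisation is $\mathbb C[x]$: since $\U(\mathbb C[x])=\mathbb C^{\bullet}$, only $a=0$ is quasiregular, so $A$ is not radical. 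Yet the Hausdorff topology on $A$ in which every nonzero polynomial is isolated and the sets $\{0\}\cup\{p:\deg p\ge N\}$ form a neighbourhood base at $0$ satisfies $a^{n}\to0$ for every $a\in A$, because $\deg(a^{n})=n\deg a\to\infty$. So the hypothesis must be read as you suggest: $A$ should be a topological algebra (multiplication separately or jointly continuous) in which the Neumann series converges, e.g.\ a sequentially complete one. With that reading your telescoping identity $(1+a)(1+s_{n})=1-(-a)^{n+1}=(1+s_{n})(1+a)$ passes to the limit and yields the quasi-inverse, exactly as in the paper. One small factual point: of the two distribution algebras you cite, only $\mathbb C\delta_{0}+\calD'_{+}(\mathbb R)$ in \S\ref{sec.distributions} is treated via this proposition; the periodic distributions are handled through the function-algebra results of Section \ref{sec.funcalg}.
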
  

	\begin{proof} 
	Assume that $A$ is a radical algebra, and $B$ is the unitisation of $A$. By Lemma \ref{unitisrad},  $A=\Jac(B)$. Since $B/\Jac(B)=B/A\cong \mathbb F$, it follows that $B$ satisfies the GK\.Z property by Proposition \ref {Jac}. Alternatively, it follows from Lemma \ref{unitisrad} that $\U(B)=B\setminus A$, so $A$ is the unique maximal one-sided ideal of $A$. By Proposition \ref{linearav}, $B$ is a GK\.Z algebra.
		
Assume that $A$ has a Hausdorff topology as described.	Let $a\in A$. Since $\lim_{n\to\infty}a^{n}=0$, the geometric series $\sum_{n=0}^{\infty}a^n$ converges in $A$, and its sum $s$  satisfies in $B$: $s(1-a)=(1-a)s=1$, so $1-a$ is invertible in $B$. By Lemma \ref{unitisrad}, $B$ is a GK\.Z-algebra.
	\end{proof}

For an application of Proposition \ref{topUnitGKZ} to distribution algebras see \S\ref{subsec.unitis}.

\section{On the  GK\.Z property of function algebras}\label{sec.funcalg}
In this section we investigate the GK\.Z  property for function algebras, using ideas related to  connections between ideals of a commutative ring and ultrafilters, although we do not use ultrafilters explicitly.
 Instead of ideals, we use maximal subspaces with no units. For ideals and ultrafilters  see the classical book \cite{GilJer},  for later results see \cite{MaaBenhFilt},  and  for the basics of ultrafilter theory see \cite{UltrafWiki}. 

Let $A$ be an algebra over a subfield $\mathbb F$ of $\mathbb C$. We prove that if $\mathbb F$ contains all the bounded functions in $\mathbb F^{X}$, then each element of $A$ is a sum of two units (Proposition \ref{subf2units}). If $A$ contains also a discrete function, then $A$ is a  GK\.Z algebra (Corollary \ref{bounDisGkz}). The condition that $A$ contains all bounded functions is implied by the following condition: if $f\in A$, $g\in \mathbb F^{X}$, and $|g|\le |f|$ (that is, $|g(x)|\le |f(x)|$ for all $x\in X$), then $g\in A$.

\goodbreak 
For the next Lemma \ref{allzero}, see \cite{RecovUltra} and especially Eric Wofsey's answer. In this lemma, the implications $(1) \Rightarrow (2) \Rightarrow (3)$  are easily proved.  Nevertheless, we indicate all the relevant implications.

\begin{lemma}\label{allzero}
Let $A$ be a commutative algebra, and let $\Lambda$ be a linear functional on  $A$ such that $\Lambda(u)\ne0$ for all $u\in\U(A)$ and $\Lambda(\mathbf 1)=1$. Let $e$ be an idempotent in $A$.  We have:
	\begin{enumerate} 
		\item
		Let $u,v\in \U(A)$. Then 
		\begin{enumerate} 
			\item 
			$ue+v(1-e)\in \U(A)$.
			
			\item 
			Exactly one of the two scalars $\Lambda(ue)$ and $\Lambda(v(1-e))$ vanishes.
		\end{enumerate}
				
		\item 
		\begin{enumerate} 
			\item 
			$\Lambda(e)\in \{0,1\}$.
			
			\item 
		For all $u\in\U(A)$,   $\Lambda(ue)=0$ if $\Lambda(e)=0$, and $\Lambda(u)=\Lambda(ue)\ne0$ if $\Lambda(e)=1$.
		
		\item
		$\Lambda(ue)=\Lambda(u)\Lambda(e)$ for all $u\in \U(A)$ and idempotents $e\in A$.
		\end{enumerate}
						
		\item 		
		If $A$ is generated by units, then 	
		\begin{enumerate} 
			\item 
			$\Lambda(ae)=0$  for all $a\in A$ and idempotents $e\ in \ker\Lambda$, so			
		$\ker \Lambda$ contains the ideal generated by the idempotents it contains. 
		
			\item 
			$\Lambda(ae)=\Lambda(a)\Lambda(e)$ for all $a\in A$ and idempotents $e\in A$.					
		\end{enumerate}	
	\end{enumerate}
	\end{lemma}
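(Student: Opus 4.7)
The plan is to establish the four claims in order, using (1)(a) and (1)(b) as the key technical engines and then propagating them through linearity.

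For (1)(a), since $A$ is commutative and $e(1-e)=0$, I expect the inverse of $ue+v(1-e)$ to be $u^{-1}e+v^{-1}(1-e)$: direct multiplication collapses the cross terms and yields $e^2+(1-e)^2=e+(1-e)=\mathbf 1$. For (1)(b), the sum $\Lambda(ue)+\Lambda(v(1-e))=\Lambda(ue+v(1-e))$ is nonzero because $ue+v(1-e)\in\U(A)$ by (1)(a) and $\Lambda$ does not vanish on units, so \emph{at least} one of the two scalars is nonzero. To show at \emph{most} one is nonzero, I will argue by contradiction: if both $\Lambda(ue)\ne 0$ and $\Lambda(v(1-e))\ne 0$, I pick the scalar $\lambda=-\Lambda(v(1-e))/\Lambda(ue)\in\mathbb F^{\bullet}$ and consider $(\lambda u)e+v(1-e)$. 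Since $\lambda u\in\U(A)$, this element is again a unit by (1)(a), yet $\Lambda$ annihilates it by the choice of $\lambda$, contradicting the hypothesis on $\Lambda$.

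For (2)(a), I specialise (1)(b) to $u=v=\mathbf 1$: exactly one of $\Lambda(e)$, $\Lambda(\mathbf 1-e)$ is zero, while their sum is $\Lambda(\mathbf 1)=1$, so $\Lambda(e)\in\{0,1\}$. For (2)(b), I apply (1)(b) with the given $u$ and with $v=\mathbf 1$ (to control $\Lambda(ue)$) and, separately, to the idempotent $\mathbf 1-e$ with roles swapped (to control $\Lambda(u(\mathbf 1-e))$). If $\Lambda(e)=0$ then $\Lambda(\mathbf 1-e)=1\ne 0$, forcing $\Lambda(ue)=0$; if $\Lambda(e)=1$ then $\Lambda(e)\ne 0$ forces $\Lambda(u(\mathbf 1-e))=0$, whence $\Lambda(u)=\Lambda(ue)+\Lambda(u(\mathbf 1-e))=\Lambda(ue)$. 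Part (2)(c) is then immediate by matching both sides against the two cases $\Lambda(e)\in\{0,1\}$.

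Finally, (3) follows by linearity once $A$ is generated by units. Writing any $a\in A$ as a finite $\mathbb F$-linear combination $a=\sum_i \alpha_i u_i$ with $u_i\in\U(A)$, I use (2)(c) termwise:
\[
\Lambda(ae)=\sum_i\alpha_i\Lambda(u_ie)=\sum_i\alpha_i\Lambda(u_i)\Lambda(e)=\Lambda(a)\Lambda(e),
\]
which is (3)(b); (3)(a) is the special case $\Lambda(e)=0$, from which $Ae\subseteq\ker\Lambda$, so $\ker\Lambda$ contains the (principal, since $A$ is unital commutative) ideal $Ae$ generated by any idempotent $e\in\ker\Lambda$. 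The only delicate step is (1)(b); once the scalar-rescaling trick $\lambda=-\Lambda(v(1-e))/\Lambda(ue)$ is in hand, the rest of the lemma unfolds mechanically.
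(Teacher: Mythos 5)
Your proof is correct and follows essentially the same route as the paper: the explicit inverse $u^{-1}e+v^{-1}(1-e)$ for (1)(a), a rescaling trick to produce a unit in $\ker\Lambda$ for the ``at most one vanishes'' half of (1)(b) (you divide by $\Lambda(ue)$ where the paper cross-multiplies by the two scalars, a cosmetic difference), and then specialisation plus linearity for (2) and (3). No gaps.
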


\begin{proof}$\;$
	\begin{enumerate}
		\item 
		\begin{enumerate} 
			\item 
			$(ue+v(1-e))(u^{-1}e+v^{-1}(1-e))=1$, so $ue+v(1-e)\in \U(A)$.
			
			\item 
			 Since $\Lambda(e+(1-e))=1$, the scalars $\Lambda(e)$ and $\Lambda(1-e)$ cannot both vanish.
			 
			  Suppose that both of these scalars are nonzero. Then
			$
			\Lambda\left(\Lambda(v(1-e))u(e)-\Lambda(ue)v(1-e)\right)=0
			$, although  $\Lambda(v(1-e))u(e)-\Lambda(ue)v(1-e)\in \U(A)$ by (1) (a), a contradiction.
		\end{enumerate}		
		
		\item
		\begin{enumerate} 
			\item follows from (1)(b) applied to $u=v=1$.
			
			\item 
			 follows from (1)(b) applied to $u$ and to $v=1$. 			
			
			\item follows from (2)(b).
		\end{enumerate}
	
		\item
		\begin{enumerate} 
			\item follows from (2)(b).		
		
			\item follows from (2)(c).		
		\end{enumerate}	
		\end{enumerate}
\end{proof} 

\begin{proposition}\label{2cases}
	Let $A$ be a function algebra that is generated by units. 
	Suppose that $A$ contains the indicator functions of all subsets of $X$. Let $\Lambda$ be a linear functional such that $\Lambda(u)\ne 0$ for all $u\in\U(A)$ and $\Lambda(\mathbf 1)=1$.  If   $\Lambda(I_{\{x_{0}\}})\ne0$ for some $x_{0}\in X$, then $\Lambda=p_{x_{0}}$.  Hence $\Lambda(I_{x})=0$ for all $x\ne x_{0}$ in $X$.
\end{proposition}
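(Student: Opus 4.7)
The plan is to leverage Lemma \ref{allzero} (parts (2) and (3)) applied to the idempotent $I_{\{x_{0}\}}$, which lies in $A$ by hypothesis. First, since $I_{\{x_{0}\}}$ is idempotent, Lemma \ref{allzero}(2)(a) forces $\Lambda(I_{\{x_{0}\}}) \in \{0,1\}$, so the assumption $\Lambda(I_{\{x_{0}\}}) \ne 0$ yields $\Lambda(I_{\{x_{0}\}}) = 1$. Decomposing $\mathbf 1 = I_{\{x_{0}\}} + I_{X\setminus\{x_{0}\}}$ and applying linearity with $\Lambda(\mathbf 1) = 1$ then gives $\Lambda(I_{X\setminus\{x_{0}\}}) = 0$.

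Next, for an arbitrary $f \in A$, I would introduce $g := f - f(x_{0})\mathbf 1 \in A$, which vanishes at $x_{0}$. Because $g(x_{0}) = 0$, we have the pointwise identity $g = g \cdot I_{X\setminus\{x_{0}\}}$ in $A$. Since $A$ is commutative (being a subalgebra of $\mathbb F^{X}$) and is generated by units by hypothesis, Lemma \ref{allzero}(3)(b) applies with $a = g$ and $e = I_{X\setminus\{x_{0}\}}$, yielding
\[
\Lambda(g) \;=\; \Lambda\bigl(g \cdot I_{X\setminus\{x_{0}\}}\bigr) \;=\; \Lambda(g)\,\Lambda(I_{X\setminus\{x_{0}\}}) \;=\; 0.
\]
Therefore $\Lambda(f) = f(x_{0})\,\Lambda(\mathbf 1) = f(x_{0}) = p_{x_{0}}(f)$, establishing $\Lambda = p_{x_{0}}$.

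The final assertion is then immediate: for any $x \ne x_{0}$, $\Lambda(I_{\{x\}}) = p_{x_{0}}(I_{\{x\}}) = I_{\{x\}}(x_{0}) = 0$. There is no serious obstacle — the argument is essentially bookkeeping on top of Lemma \ref{allzero}. The one subtlety worth highlighting is the necessity of the hypothesis that $A$ contains all indicator functions (so that $I_{X\setminus\{x_{0}\}}$ is available in $A$ to play the role of the idempotent $e$), together with the unit-generation hypothesis that is required to invoke part (3)(b) of the lemma rather than only part (2)(c), which would apply only when $g$ itself is a unit.
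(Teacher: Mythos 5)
Your proof is correct and follows essentially the same route as the paper: both hinge on Lemma \ref{allzero}(2)(a) to get $\Lambda(I_{\{x_{0}\}})=1$ and then on Lemma \ref{allzero}(3)(b) applied to an indicator idempotent. The paper works directly with $e=I_{\{x_{0}\}}$ via the pointwise identity $f\,I_{\{x_{0}\}}=f(x_{0})I_{\{x_{0}\}}$, whereas you subtract $f(x_{0})\mathbf 1$ and use the complementary idempotent $I_{X\setminus\{x_{0}\}}$ — a purely cosmetic difference.
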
  

\begin{proof} 
Suppose that $\Lambda(I_{\{x_{0}\}})\ne0$ for some $x_{0}\in X$.	Since $\Lambda(I_{\{x_{0}\}})=1$ by Lemma \ref{allzero}(2)(a), we obtain  by Lemma \ref{allzero} (3) (b) for every $f \in A$: $\Lambda(f)\Lambda(I_{\{x_{0}\}})=\Lambda(f\;\!I_{\{x_{0}\}})=\Lambda(f(x_{0})I_{\{x_{0}\}})=f(x_{0})\Lambda(I_{\{x_{0}\}})=f(x_{0})=p_{x_{0}}(f)$.		
\end{proof}

\begin{remark}\label{cases} 
	If a function algebra $A$ on a set $X$ contains the indicators of all subsets of $X$, then we may use definition by cases to obtain a function in $A$.
\end{remark} 
More precisely,  given  a partition of $X$ into $n$ disjoint subsets $S_{1}, \dots, S_{n}$, and functions $f_{1}, \cdots, f_{n}$ in $A$, then the function $f=\sum_{i=1}^n f_{i}I_{S_{i}}$ belongs to $A$. \qed 

\begin{proposition}\label{subf2units} 
	Let $A$ be a function algebra over a subfield $\mathbb F$ of $\mathbb C$. Assume that $A$ contains all the bounded functions in $\mathbb F^{X}$. Then every element of $A$ is a sum of two units.	
\end{proposition}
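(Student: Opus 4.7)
The plan is to produce an explicit decomposition $f = u + v$ in which both summands are units. The key hypothesis on $A$ is very strong in one direction: any bounded $\mathbb{F}$-valued function on $X$ lies in $A$. So for an element $g \in A$ to be a unit it suffices that $g$ be bounded away from $0$, since then the pointwise inverse $1/g$ is a bounded $\mathbb{F}$-valued function and hence lies in $A$. My goal is to choose $u$ bounded and nowhere $0$, with $f-u$ also bounded away from $0$.

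The construction rests on one elementary geometric observation: for every $z \in \mathbb{C}$, at least one of $|z-1|, |z+1|$ is $\geq 1$, because their sum is $\geq |(z+1)-(z-1)| = 2$. Given $f \in A$, I would therefore set
\[
S \,:=\, \{\,x \in X : |f(x) - 1| \geq 1\,\}, \qquad u \,:=\, I_S - I_{X \setminus S}.
\]
Then $u$ takes values in $\{-1,+1\}\subseteq \mathbb{F}$, so $u \in \mathbb{F}^X$ is bounded, whence $u \in A$. Its pointwise reciprocal $1/u = u$ is likewise bounded, hence in $A$, so $u \in \mathcal{U}(A)$.

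Next put $v := f - u \in A$. For $x \in S$, $|v(x)| = |f(x) - 1| \geq 1$; for $x \notin S$, $|v(x)| = |f(x) + 1| = |(f(x)-1) + 2| \geq 2 - |f(x)-1| > 1$. In either case $|v(x)| \geq 1$, so $v$ is nowhere zero and its pointwise reciprocal satisfies $|1/v| \leq 1$. Hence $1/v$ is a bounded $\mathbb{F}$-valued function, lies in $A$ by hypothesis, and $v \in \mathcal{U}(A)$. The identity $f = u + v$ is then the desired presentation of $f$ as a sum of two units.

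There is no real obstacle beyond the initial choice of the two values $\pm 1$; the whole argument is designed around the fact that closure of $A$ under inversion is automatic for bounded, bounded-below, $\mathbb{F}$-valued functions. A one-line sanity check is that $\pm 1 \in \mathbb{F}$, which is automatic since $\mathbb{F}$ is a subfield of $\mathbb{C}$.
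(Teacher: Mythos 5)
Your proof is correct and follows essentially the same strategy as the paper's: both write $f=u+(f-u)$ where $u$ is a piecewise-constant function (hence bounded, hence in $A$, and a unit because its reciprocal is bounded) chosen by cases so that $f-u$ is bounded away from $0$ and therefore also a unit. The only difference is cosmetic --- you shift by $\pm1$ according to whether $|f(x)-1|\ge 1$, while the paper shifts by $-3$ or $+1$ according to whether $|f(x)|\le 2$; your choice is marginally cleaner since your $u$ is its own inverse.
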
  

\begin{proof} 
	Let $f\in A$. We define two functions $g$ and $h$ in $\mathbb F^{X}$ as follows:
	\[
	g(x)=
	\begin{cases}
		f(x)+3	&\text{if } |f(x)|\le 2\\
		f(x)-1			&\text{ if }	|f(x)|>2
	\end{cases}
	\]
	
	\[
	h(x)=
	\begin{cases}
		\frac	1{f(x)+3} 	&\text{if } |f(x)|\le 2\\
		\frac 1	{f(x)-1}			&\text{ if }	|f(x)|>2.
	\end{cases}
	\]		
	We have $g\in A$ by Remark \ref{cases}, and $h\in A$ since $h$ is a bounded function. Also $gh=I_{X}$, so $g$ is invertible. We have $f=(f-g)+g$, and $f-g$ is invertible since $\frac {1} {f-g}$ is bounded. Hence $f$ is a sum of two units in $A$.	
\end{proof}  

\begin{corollary}\label{absBound} 
	Let $A$ be a function algebra over a subfield $\mathbb F$ of $\mathbb C$. Assume that  if $f\in A, g\in \mathbb F^X$, and $|g|\le |f|$, then $g\in A$. Then $A$ contains all the bounded functions in $\mathbb F^{X}$, so every element of $A$ is a sum of two units.
\end{corollary}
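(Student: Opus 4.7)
The plan is short because the corollary is essentially a specialization of Proposition~\ref{subf2units}: once we verify the hypothesis of that proposition, we are done. So the only real work is showing that the ``domination'' condition forces $A$ to contain every bounded function in $\mathbb F^{X}$.

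First I would fix an arbitrary bounded function $g\in\mathbb F^{X}$ and choose a bound: there is a real number $M>0$ with $|g(x)|\le M$ for all $x\in X$. The goal is to exhibit some $f\in A$ with $|g|\le|f|$ and then invoke the assumed closure property to conclude $g\in A$. The natural choice is a constant multiple of $\mathbf 1_{A}=I_{X}$.

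Next I would handle the mild subtlety that $M$ itself need not lie in $\mathbb F$, only in $\mathbb R$. Since $\mathbb F$ is a subfield of $\mathbb C$, it contains the prime field $\mathbb Q$, and in particular every positive integer. Pick any integer $n\ge M$. Then $f:=n\,I_{X}\in A$ (as $A$ is a unital $\mathbb F$-subalgebra of $\mathbb F^{X}$), and for every $x\in X$ we have $|g(x)|\le M\le n=|f(x)|$. By hypothesis, $g\in A$. As $g$ was an arbitrary bounded element of $\mathbb F^{X}$, this shows $A$ contains all bounded functions in $\mathbb F^{X}$.

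Finally, the hypotheses of Proposition~\ref{subf2units} are now met, and the conclusion ``every element of $A$ is a sum of two units'' follows immediately. There is no serious obstacle here; the only point one must be careful about is not assuming that the real bound $M$ lies in $\mathbb F$, which is circumvented by replacing it with a sufficiently large natural number.
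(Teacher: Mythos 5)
Your proof is correct and follows essentially the same route as the paper: dominate an arbitrary bounded $g$ by $nI_{X}$ for a sufficiently large positive integer $n$, invoke the closure hypothesis to get $g\in A$, and then apply Proposition~\ref{subf2units}. The only difference is that you spell out the (harmless) point that the real bound need not lie in $\mathbb F$, which the paper handles implicitly by choosing an integer bound from the start.
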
 

\begin{proof} 
	Let $g$ be a bounded function in $\mathbb F^{X}$. Let $n$ be a positive integer such that $|g(x)|\le n$ for all $x\in X$. Thus $|g|\le nI_{X}$, implying that $g\in A$. By Proposition \ref{subf2units}, every element of $A$ is a sum of two units in $A$.	
\end{proof}

\begin{remark}\label{zerosetcof} 
In the setting of the next Proposition \ref{modL}, the ideal $L$ is the set generated by the indicators of all finite subsets of $X$. Thus $L$ consists of all functions  $t\in A$ such that $t(x)=0$ for all $x\in X$, except finitely many elements, that is, the set $t^{-1}(0)$ is cofinite in $X$.		
\end{remark} 

\begin{proposition}\label{modL}
Let $A\subseteq \mathbb F^{X}$ be a function algebra that contains the indicators of all singletons of $X$, and let $L$ be the ideal of $A$ generated by these indicators.  Then 	
\begin{enumerate} 
	\item 
	$A$ is a GK\.Z algebra  $\Leftrightarrow A/L$ is a GK\.Z algebra (Here  $A/L$ is  the zero algebra when $L=A$).
	
	\item
	If there exists in $A$ an element $f$ such that $f-\lambda I_{X}$ is invertible modulo $L$ for every scalar $\lambda\in \mathbb F$, then the only linear functionals $\Lambda$ on $A$ such that $\Lambda(u)\ne0$ for all $u\in\U(A)$ and $\Lambda(\mathbf 1)=1$,  are the projections $p_{x}$ for $x\in X$. Thus $A$ is a GK\.Z algebra.
\end{enumerate}
\end{proposition}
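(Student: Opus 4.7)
The plan for (1) is to invoke Proposition~\ref{A/I}(2). The forward implication is immediate from Proposition~\ref{homimg}, so the content is the converse, for which it suffices to show that every unit of $A/L$ is liftable to a unit of $A$. Given a representative $g \in A$ of such a unit with $gh - I_{X} \in L$ for some $h \in A$, Remark~\ref{zerosetcof} pins down a finite set $F \subseteq X$ off which $gh = I_{X}$. I would then set
\[
g' = g + \sum_{x \in F}(1 - g(x))\, I_{\{x\}}, \qquad h' = h + \sum_{x \in F}(1 - h(x))\, I_{\{x\}},
\]
which lie in $A$ because each $I_{\{x\}} \in A$, represent the same cosets as $g$ and $h$ modulo $L$, and satisfy $g'h' = I_{X}$ by a direct pointwise check on $F$ and on its complement. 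Hence $g + L = g' + L$ is represented by the unit $g' \in \U(A)$, and Proposition~\ref{A/I}(2) yields the equivalence.

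For (2), the hypothesis asserts that $f + L$ has empty spectrum in $A/L$, so by Remark~\ref{emptyspec} the quotient $A/L$ is a GK\.Z algebra (vacuously), and then part (1) gives that $A$ itself is a GK\.Z algebra. Consequently, any linear functional $\Lambda$ satisfying the stated unit and normalisation conditions is automatically multiplicative, and it remains to identify $\Lambda$ with some projection $p_{x_{0}}$.

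To this end, I would first show that $L \not\subseteq \ker \Lambda$: otherwise $\Lambda$ descends to a multiplicative functional $\widetilde\Lambda$ on $A/L$, and then $\lambda := \widetilde\Lambda(f + L)$ makes $(f - \lambda I_{X}) + L$ lie in $\ker \widetilde\Lambda$ despite being a unit of $A/L$, a contradiction. Hence some $x_{0} \in X$ satisfies $\Lambda(I_{\{x_{0}\}}) \ne 0$; the idempotency $I_{\{x_{0}\}}^{2} = I_{\{x_{0}\}}$ together with multiplicativity of $\Lambda$ forces $\Lambda(I_{\{x_{0}\}}) \in \{0,1\}$, hence $\Lambda(I_{\{x_{0}\}}) = 1$. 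Finally, the pointwise identity $g \cdot I_{\{x_{0}\}} = g(x_{0})\, I_{\{x_{0}\}}$ for every $g \in A$, combined with multiplicativity, gives
\[
\Lambda(g) \;=\; \Lambda(g)\Lambda(I_{\{x_{0}\}}) \;=\; \Lambda(g \cdot I_{\{x_{0}\}}) \;=\; g(x_{0})\, \Lambda(I_{\{x_{0}\}}) \;=\; g(x_{0}) \;=\; p_{x_{0}}(g),
\]
so $\Lambda = p_{x_{0}}$.

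The main obstacle I foresee is the lifting step in (1): $A$ is only assumed to contain the singleton indicators, not the indicators of arbitrary finite subsets of $X$, so the correction of $g$ and $h$ on the finite exceptional set $F$ must be carefully assembled from finitely many $I_{\{x\}}$'s. Once this technicality is handled, the rest of the argument is a direct chain of applications of Remark~\ref{zerosetcof}, Remark~\ref{emptyspec}, Proposition~\ref{homimg} and Proposition~\ref{A/I}(2).
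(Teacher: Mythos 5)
Your proof is correct. For part (1) you take the unit-lifting route through Proposition~\ref{A/I}(2); this is exactly the alternative proof the paper records in the remark immediately following the proposition (your correction $g' = g+\sum_{x \in F}(1-g(x))I_{\{x\}}$ is the paper's case-defined lift $f_{0}$, rewritten so that membership in $A$ is visible from the singleton indicators alone --- and the technicality you flag is harmless, since $I_{F}=\sum_{x\in F}I_{\{x\}}$ is a finite sum of elements of $A$). The paper's primary proof of (1) instead goes through Proposition~\ref{2cases}, i.e.\ the idempotent analysis of Lemma~\ref{allzero}, which directly classifies every admissible functional with $\Lambda(L)\ne 0$ as a projection and then concludes via the avoidance criterion of Proposition~\ref{avoide}; that stronger output is what lets the paper dispose of the ``projections'' clause of (2) with no further work. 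Because your route to (1) does not produce that classification, you must supply it separately in (2), and you do so correctly: part (1) plus Remark~\ref{emptyspec} makes $\Lambda$ multiplicative, the empty spectrum of $f+L$ rules out $L\subseteq\ker\Lambda$ (a unital multiplicative functional cannot kill the unit $(f-\lambda I_{X})+L$), and the idempotent computation with $I_{\{x_{0}\}}$ pins down $\Lambda=p_{x_{0}}$. The two approaches are of comparable length; the paper's buys a reusable description of all admissible functionals on $A$ (used again in Theorem~\ref{main} and its corollaries), while yours is more self-contained in that it bypasses Lemma~\ref{allzero} and the hypothesis mismatch one might worry about when citing Proposition~\ref{2cases} (which is stated for algebras containing indicators of all subsets and generated by units).
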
  

\begin{proof} \ 
	 
	 \begin{enumerate} 
	 	\item 
	  By Proposition \ref{2cases}, the only linear functionals $\Lambda$ on $A$ such that $\Lambda(u)\ne0$ for all $u\in\U(A)$,  $\Lambda(\mathbf 1)=1$, and $\Lambda(L)\ne(0)$ are the projections $p_{x}$ for $x\in X$. Hence, $A$ is a GK\.Z algebra  $\Leftrightarrow A/L$ is a GK\.Z algebra.
	  
	 	\item
	By assumption, the spectrum of $f+L$ in $A/L$ is empty, so $A/L$ is a GK\.Z algebra by Remark \ref{emptyspec}. By (1), $A$ is a GK\.Z algebra.
\end{enumerate}	
\end{proof}  

\begin{remark} 
Here is an alternative proof of Proposition \ref{modL} using Proposition \ref{gkzlift}.	
\end{remark} 

By Proposition \ref{modL}, it is enough to show that if $f\in A$ and $f$ is a unit modulo $L$, then there exists $f_{0}\in\U(A)$ such that $f-f_{0}\in L$. By assumption, there exists $g\in A$ such that $fg=1+t$ for some $t\in L$. Define for $x\in X$:
\[f_{0}(x)=
\begin{cases}
	f(x) &\text{ if } t(x)=0 \text{ (that is, if } f(x)g(x)=1)\\
	1 & \text{ if } \text{otherwise}.
\end{cases}					
\]
Define $g_{0}$ similarly. Clearly, $f_{0}g_{0}=I_{X}$, and if $t(x)=0$ for some $x\in X$, then $(f-f_{0})(x)=0$, so by Remark \ref{zerosetcof}, $f-f_{0}\in L$, as required.	\qed

\begin{corollary}
	If the set $X$	is finite, then a subspace of $\mF^{X}$ that  contains  the indicators of all singletons of $X$ is equal to $\mF^{X}$. Hence, by Proposition \ref{modL}, $\mathbb F^{n}$  is a GK\.Z  algebra for all  positive integers $n$.
\end{corollary}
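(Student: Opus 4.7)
The plan is to dispatch both assertions quickly, since the second is essentially immediate from Proposition~\ref{modL} once the first is settled.

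For the first assertion, suppose $X$ is finite and let $V\subseteq \mathbb F^{X}$ be a subspace containing $I_{\{x\}}$ for every $x\in X$. For any $f\in \mathbb F^{X}$ one has the finite decomposition
\[
f \;=\; \sum_{x\in X} f(x)\, I_{\{x\}},
\]
which is a finite $\mathbb F$-linear combination of elements of $V$ and hence lies in $V$. Thus $V=\mathbb F^{X}$. The only step to verify carefully is that the sum is genuinely finite, which is exactly where the finiteness of $X$ enters.

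For the second assertion, identify $\mathbb F^{n}$ with $\mathbb F^{X}$ for $X=\{1,\dots,n\}$ and take $A=\mathbb F^{X}$, which is a function algebra containing the indicators of all singletons. Let $L$ be the ideal of $A$ generated by these indicators. Since $L$ is in particular a subspace of $A$ containing all $I_{\{x\}}$, the first assertion forces $L=A$. Therefore $A/L$ is the zero algebra, which by the convention adopted just after Definition~\ref{defGKZ} is a GK\.Z algebra. Proposition~\ref{modL}(1) then yields that $A=\mathbb F^{n}$ is itself a GK\.Z algebra.

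There is no real obstacle here; the only thing worth flagging is that Proposition~\ref{modL}(1) explicitly covers the degenerate case $L=A$, so no extra argument is needed to handle the fact that the quotient collapses.
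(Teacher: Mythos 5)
Your proof is correct and follows exactly the argument the paper intends: the finite decomposition $f=\sum_{x\in X}f(x)I_{\{x\}}$ shows the subspace is all of $\mathbb F^{X}$, hence $L=A$, the quotient is the zero algebra (a GK\.Z algebra by the paper's convention), and Proposition~\ref{modL}(1) finishes the job. Nothing further is needed.
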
	

Let $\mathbb F$ be a subfield of $\mathbb C$. A function $f\in \mathbb F^{X}$ is  called {\em discrete} if the set $f(X)$ is discrete in $\mathbb C$, and the sets $f^{-1}(\lambda)$ are finite for all $\lambda\in \mathbb C$.

\begin{remark} 
If $\mathbb F^{X}$ contains a discrete function, then $|X|=\aleph_{0}$. On the other hand, if $X=\{x_{n}\ (n\in \mathbb N)\}$, where the elements $x_{n}$ are distinct, and $f\in \mathbb F^{X}$, then $f$ is discrete if and only if $\lim_{n\to \infty}|f(x_{n})|=\infty$.
\end{remark} 

\begin{theorem} \label{main}
	Let $A\subseteq \mathbb F^{X}$ be a unit generated function algebra over a field $\mathbb F$ contained in $\mathbb C$. Assume that $A$ contains the indicators of all subsets of $X$, and a discrete function $f$. Then the only linear functionals $\Lambda$ on $A$ such that $\Lambda(u)\ne0$ for all $u\in\U(A)$ and $\Lambda(\mathbf 1)=1$,  are the projections $p_{x}$ for $x\in X$. Thus $A$ is a GK\.Z algebra. 
\end{theorem}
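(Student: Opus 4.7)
The plan is to apply Proposition \ref{modL}(2), which reduces the theorem to showing that the given discrete function $f \in A$ satisfies: for every $\lambda \in \mathbb F$, the element $f - \lambda I_X$ is invertible modulo the ideal $L \subseteq A$ generated by the indicators of singletons. By Remark \ref{zerosetcof}, $L$ consists of those functions in $A$ with cofinite zero set; the hypothesis of Proposition \ref{modL} is met since $A$ contains the indicators of all singletons of $X$.

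Fix $\lambda \in \mathbb F \subseteq \mathbb C$. Discreteness of $f$ gives that the level set $S_\lambda := f^{-1}(\lambda)$ is finite, so $I_{S_\lambda} \in L$, and $|f(x_n) - \lambda| \to \infty$ along any enumeration of the (necessarily countable) set $X$. Consider the nowhere-vanishing element
\[
h_\lambda := (f - \lambda I_X) + I_{S_\lambda} \in A.
\]
If $h_\lambda$ is a unit of $A$, then setting $g := 1/h_\lambda \in A$, a direct pointwise check gives $(f - \lambda I_X) \cdot g = I_X - I_{S_\lambda} \equiv I_X \pmod L$, which is the required invertibility modulo $L$.

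The main obstacle is showing $h_\lambda \in \U(A)$, i.e.\ that $1/h_\lambda$ actually lies in $A$. The ingredients available are: $A$ contains all indicators $I_S$ for $S \subseteq X$, so definition by cases is available (Remark \ref{cases}); $A$ is unit generated, so every element is a finite sum of units; and $f$ is discrete, so $|h_\lambda(x)|$ is bounded below by a positive constant on $X$, in fact $|h_\lambda(x)| \to \infty$ outside a finite set. The intended line of attack is to partition $X$ into a finite "exceptional" piece, where $h_\lambda$ is prescribed in finitely many ways and can be handled directly via indicators, and a cofinite "tail" where $|f - \lambda|$ is uniformly bounded away from zero; unit generation of $A$ is expected to be essential in combining these pieces into an actual unit, because nowhere-vanishing alone does not in general force invertibility in a function algebra.

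Once $h_\lambda \in \U(A)$ is verified for every $\lambda \in \mathbb F$, Proposition \ref{modL}(2) immediately yields both conclusions of the theorem: the only linear functionals $\Lambda$ on $A$ with $\Lambda(u) \neq 0$ on units and $\Lambda(\mathbf 1) = 1$ are the projections $p_x$ for $x \in X$, and consequently $A$ is a GK\.Z algebra.
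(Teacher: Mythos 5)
Your reduction is exactly the paper's: invoke Proposition \ref{modL}(2), and verify its hypothesis by modifying $f-\lambda I_X$ on the finite level set $S_\lambda=f^{-1}(\lambda)$ to get a nowhere-vanishing $h_\lambda$ with $h_\lambda\equiv f-\lambda I_X \pmod L$. Your $h_\lambda=(f-\lambda I_X)+I_{S_\lambda}$ is literally the paper's $f_\lambda$. The problem is that you stop at what you yourself call ``the main obstacle'': you never actually prove $h_\lambda\in\U(A)$, i.e.\ that $1/h_\lambda\in A$. What you offer instead is an ``intended line of attack'' (split $X$ into the finite exceptional set and the cofinite tail, then hope unit generation glues the pieces), and that sketch does not close the gap. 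On the cofinite tail $h_\lambda=f-\lambda I_X$, and its reciprocal is merely a bounded function; containing the indicators of all subsets of $X$ only gives you \emph{finite} linear combinations of indicators, not arbitrary bounded functions, so there is no route from indicators alone to $1/h_\lambda\in A$. Unit generation is also of no help here: it says every element of $A$ is a finite sum of units, not that a prescribed nowhere-vanishing element is a unit — indeed you correctly note that nowhere-vanishing does not force invertibility in a function algebra, which is precisely why the step cannot be waved at.

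For comparison, the paper closes this step by observing that $h_\lambda$ is again discrete, so $m=|\min h_\lambda(X)|>0$ and $|1/h_\lambda|\le 1/m$ is bounded, and then concluding $1/h_\lambda\in A$. That conclusion uses that $A$ contains the relevant bounded functions — a fact available in every context where Theorem \ref{main} is applied (Corollaries \ref{bounDisGkz}, \ref{absGkz} and \ref{gkzfunalg} all guarantee that $A$ contains all bounded functions of $\mathbb F^X$, via Proposition \ref{subf2units} and Corollary \ref{absBound}). If you add that observation — $1/h_\lambda$ is bounded because $f$ is discrete, hence lies in $A$ whenever $A$ contains the bounded functions — your argument becomes the paper's proof. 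As written, the proposal is an accurate plan with the decisive step left unproved.
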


\begin{proof}
	 By Proposition \ref{modL} and in the same notation, it is enough to show that $f-\lambda \mathbf 1$ is invertible modulo $L$ for all $\lambda\in \mathbb F$. Define the function $f_{\lambda}:X\to \mathbb C$ as follows:
	\[	f_{\lambda}(x)=	\begin{cases}
						1		&\text{if } f(x)=\lambda I_{X}\\ 
		 				f(x)-\lambda I_{X}	&\text{if } f(x)\ne\lambda I_{X}.
	\end{cases}
	\]
	Since $f$ is discrete, $f(x)=\lambda I_{X}$ just for finitely many $x$'s. Hence  
	\[
	(f(x)-\lambda I_{X})-f_{\lambda}\in L.
	\]  Since the function $f_{\lambda}$ is discrete, there exists $m=|\min f_{\lambda}(X)|$, and $m>0$, implying that   $|\frac {1} {f_{\lambda}}(x)|\le \frac {1} {m}$, so $\frac {1} {f_{\lambda}}\in A$, and $f_{\lambda}$ is invertible in $A$. Since ${f(x)-\lambda I_{X}}\equiv f_{\lambda} \pmod L$, we infer that ${f(x)-\lambda I_{X}}$ is invertible modulo $L$ as required.
\end{proof}

\begin{corollary}\label{bounDisGkz}
	Let $A\subseteq \mathbb F^{X}$ be a function algebra over a subfield $\mathbb F$ of $\mathbb C$. Assume that  $A$ contains all the bounded functions in $\mathbb F^{X}$, and a discrete function $f$. Then every element of $A$ is a sum of two units,  the only linear functionals $\Lambda$ on $A$ such that $\Lambda(u)\ne0$ for all $u\in\U(A)$ and $\Lambda(\mathbf 1)=1$,  are  projections. Thus $A$ is a GK\.Z algebra.
\end{corollary}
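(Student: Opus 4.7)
The plan is to derive this corollary essentially as a packaged consequence of the three preceding results: Proposition \ref{subf2units}, the observation that bounded functions include all indicators, and Theorem \ref{main}. The hypothesis that $A$ contains every bounded $\mathbb F$-valued function on $X$ is the key input that does triple duty.

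First I would invoke Proposition \ref{subf2units} directly: since $A$ contains all bounded functions in $\mathbb F^{X}$, every element of $A$ is a sum of two units. This immediately yields the first conclusion, and as a by-product it shows $A = \Span(\U(A))$, so $A$ is unit-generated. Second, I would observe that for any subset $Y\subseteq X$ the indicator $I_{Y}$ takes values in $\{0,1\}\subseteq \mathbb F$ and is bounded, hence $I_{Y}\in A$; thus $A$ contains the indicators of all subsets of $X$ (in particular the indicators of all singletons).

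At this point all the hypotheses of Theorem \ref{main} are verified for $A$: it is a unit-generated function algebra over a subfield of $\mathbb C$, it contains the indicators of all subsets of $X$, and by assumption it contains a discrete function $f$. Applying Theorem \ref{main} gives that the only linear functionals $\Lambda$ on $A$ with $\Lambda(u)\ne 0$ for all $u\in \U(A)$ and $\Lambda(\mathbf 1)=1$ are the point evaluations $p_{x}$ for $x\in X$. Since each $p_{x}$ is multiplicative, the GK\.Z property follows.

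There is really no obstacle here; the corollary is a repackaging. The only small thing to check is that the hypotheses of Theorem \ref{main} are all present, and the containment of indicators in $A$ is exactly the point worth spelling out in a single line, since the reader might otherwise overlook why Theorem \ref{main} is available.
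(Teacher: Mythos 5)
Your proposal is correct and follows essentially the same route as the paper: note that indicators are bounded and hence lie in $A$, apply Proposition \ref{subf2units} to get that every element is a sum of two units (hence $A$ is unit generated), and then invoke Theorem \ref{main}. Your extra sentence making explicit why the hypotheses of Theorem \ref{main} hold is a harmless elaboration of the paper's argument.
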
  

\begin{proof} 
	$A$ contains the indicators of all subsets of $X$ since the indicators are bounded functions. By Proposition \ref{subf2units}, every element of $A$ is a sum of two units. We conclude the proof by Theorem \ref{main}.
\end{proof}

\begin{corollary}\label{absGkz} 
	Let $A\subseteq \mathbb F^{X}$ be a function algebra over a subfield $\mathbb F$ of $\mathbb C$. Assume that  if $f\in A, g\in \mathbb F^X$, and $|g|\le |f|$, then $f\in A$, and that $A$ contains a discrete function. Then every element of $A$ is a sum of two units, $A$ contains all the bounded functions in $\mathbb F^{X}$, and the only linear functionals $\Lambda$ on $A$ such that $\Lambda(u)\ne0$ for all $u\in\U(A)$ and $\Lambda(\mathbf 1)=1$,  are the projections $p_{x}$ for $x\in X$. Thus $A$ is a GK\.Z algebra.
\end{corollary}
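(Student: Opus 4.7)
The plan is to chain together the two preceding corollaries: Corollary \ref{absBound} gives us the bounded functions and the two-units decomposition from the absolute-value hypothesis alone, and then Corollary \ref{bounDisGkz} promotes this to the GK\.Z conclusion once a discrete function is available.

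First I would invoke Corollary \ref{absBound}. The hypothesis that $|g|\le|f|$ with $f\in A$ and $g\in\mathbb F^{X}$ forces $g\in A$ is exactly what that corollary requires. Its conclusion yields two things at once: $A$ contains every bounded function in $\mathbb F^{X}$, and every element of $A$ is a sum of two units. Both of these assertions of the present corollary are therefore immediate.

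Next I would feed this into Corollary \ref{bounDisGkz}. That corollary takes as input a function algebra $A\subseteq\mathbb F^{X}$ containing all bounded functions in $\mathbb F^{X}$ together with a discrete function. The first hypothesis has just been verified, and the second is assumed. The conclusion of Corollary \ref{bounDisGkz} is precisely that the only linear functionals $\Lambda$ on $A$ with $\Lambda(u)\ne 0$ for all $u\in\U(A)$ and $\Lambda(\mathbf 1)=1$ are the projections $p_{x}$ for $x\in X$, and hence that $A$ is a GK\.Z algebra.

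Since the statement is literally the conjunction of the conclusions of the two corollaries under a hypothesis that implies the hypotheses of both, there is no real obstacle; the proof is a two-line citation. The only thing to check carefully is that the absolute-value hypothesis in the present statement is precisely the one used in Corollary \ref{absBound}, which it is.
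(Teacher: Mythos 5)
Your proof is correct and is essentially identical to the paper's: it also simply cites Corollary \ref{absBound} to obtain all bounded functions (and the two-units decomposition) and then Corollary \ref{bounDisGkz} to conclude. You also rightly read the hypothesis as ``$g\in A$'' (the ``$f\in A$'' in the statement is a typo), matching the hypothesis of Corollary \ref{absBound}.
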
    

 \begin{proof}  
 	By Corollary \ref{absBound}, $A$ contains all bounded functions. We conclude the proof by Corollary \ref{bounDisGkz}.
 \end{proof}  

\begin{corollary}\label{gkzfunalg} 
	Let $\mathcal H$ be a set of functions in $[2,\infty)^X$ satisfying the following conditions:
	
	\begin{enumerate}
		\item
		For each $h_{1}, h_{2}\in \mathcal H$ there exists $h\in \mathcal H$ such that for all $x\in X$, 
		$h_{1}(x)+h_{2}(x)\le h(x)$.
		\item
		For each $h_{1}, h_{2}\in \mathcal H$ there exists $t\in \mathcal H$ such that  for all $x\in X$
		$h_{1}(x)h_{2}(x)\le t(x)$.
	\end{enumerate}
	Let $A=\{f\in \mF^{X}\mid \exists h\in \mathcal H\textrm{ such that  }\forall x\in X, \;|f(x)|\le h(x)\}$.
	We have:
	\begin{enumerate} 
		\item[(a)]
		
		$A$ is a unital $\mF$-subalgebra of $\mathbb F^{X}$ containing $\mathcal H$.
		
		\item[(b)]
		
		Every element of $A$ is a sum of two units.
		
		\item[(c)]
		If $A$ contains a discrete function, then the only linear functionals $\Lambda$ on $A$ such that $\Lambda(u)\ne0$ for all $u\in\U(A)$ and $\Lambda(\mathbf 1)=1$,  are projections. Thus $A$ is a GK\.Z algebra.
	\end{enumerate} 	
\end{corollary}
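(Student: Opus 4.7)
The strategy is to reduce each of (a), (b), (c) to a previously established result. The pivotal intermediate claim is that $A$ contains every bounded function in $\mathbb F^{X}$; once this is in hand, part (b) is immediate from Proposition~\ref{subf2units}, and under the extra hypothesis of part (c), Corollary~\ref{bounDisGkz} yields simultaneously that every element of $A$ is a sum of two units, that the only unit-nonvanishing unity-preserving linear functionals on $A$ are the projections $p_{x}$, and that $A$ is a GK\.Z algebra.

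For (a), I would first observe that repeated application of condition (1) gives, by induction on $n$, that for every $h \in \mathcal H$ and every positive integer $n$ there exists $h^{(n)} \in \mathcal H$ with $n\, h(x) \le h^{(n)}(x)$ for all $x \in X$. With this in hand, closure of $A$ under addition is immediate from (1), closure under multiplication from (2), and closure under $\mathbb F$-scalar multiplication from the iterated form of (1) applied to any integer $n \ge |\lambda|$. The unity $I_{X}$ lies in $A$ because every $h \in \mathcal H$ satisfies $h \ge 2 \ge 1$, and the inclusion $\mathcal H \subseteq A$ is immediate from the definition.

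The same inductive bound delivers the crucial reduction: if $\phi \in \mathbb F^{X}$ is bounded with $|\phi| \le c$ for some $c > 0$, choose any $h \in \mathcal H$ together with a positive integer $n \ge c$; then $|\phi(x)| \le c \le n \le n\, h(x) \le h^{(n)}(x)$ for all $x \in X$, so $\phi \in A$. Hence $A$ contains every bounded function in $\mathbb F^{X}$, and parts (b) and (c) now follow by citing Proposition~\ref{subf2units} and Corollary~\ref{bounDisGkz} respectively. There is no genuine obstacle in the argument; the only step that rises above routine unpacking of the definitions is the inductive amplification of condition~(1), which is used both for scalar multiplication in (a) and for absorbing arbitrary constant bounds in the bounded-function reduction that powers (b) and (c).
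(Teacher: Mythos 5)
Your proof is correct and follows essentially the same route as the paper: establish closure of $A$ under the algebra operations via the inductive amplification of condition (1), show $A$ contains all bounded functions of $\mathbb F^{X}$, and then invoke Proposition~\ref{subf2units} and Corollary~\ref{bounDisGkz}. The only (immaterial) difference is that you verify the bounded-function containment directly via $|\phi|\le n\le nh\le h^{(n)}$, whereas the paper routes through Corollary~\ref{absBound} by first checking that $|g|\le|f|$ and $f\in A$ imply $g\in A$.
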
 

\begin{proof} 
	$\;$
	
	\noindent (a)
	By	item (1), we obtain inductively that for every $f\in A$ and every integer $n\ge1$, we have $|nf|\le h$ for some $h\in \mathcal H$, so $\lambda f\in A$ for all $\lambda\in \mF$. It is now easy to complete the proof of (a).	
	We conclude the proof using Corollary \ref{absBound}.	
\end{proof}

For an application of Corollary \ref{gkzfunalg} to distribution algebras see \S\ref{subsec.period}.

\section{The GK\.Z property for algebras of distributions}\label{sec.distributions}
 As usual, $\calD(\mR)$ is the space of test functions (compact supported complex-valued infinitely differentiable functions on $\mR$), and $\calD'(\mR)$ is the space of distributions on $\mR$.  We study three subspaces of $\mathcal D^{\prime}(\mathbb R)$ that are algebras with convolution as multiplication.

\subsection{Algebra of periodic distributions}\label{subsec.period}
For background on periodic distributions and its Fourier
series theory, we refer the reader to \cite[p. 527-529]{Tre}.
For $v\in {{\mathbb{R}}}$, the {\em translation operator} $\mathbf{S}_v:\calD'(\mR)\rightarrow \calD'(\mR)$,  is given by 
$$
\langle
{\mathbf{S}_{v}}t,\varphi\rangle = \langle
t,\varphi(\cdot+{v})\rangle\textrm{ for all }\varphi\in
{\mathcal{D}}({\mathbb{R}}), \textrm{ and for all } t \in \calD'(\mR).
$$ 
A distribution $t\in {\mathcal{D}}'({\mathbb{R}})$ is called {\em
	periodic with period}
$v\in {\mathbb{R}}\setminus \{\mathbf{0}\}$ if
$
{\mathbf{S}_v}t=t.
$ 
We define ${\mathcal{D}}'_{v}(\mR)$ to be the set of the periodic distributions with period $v$. As  is well known, ${\mathcal{D}}'_{v}(\mR)$ is a complex algebra with convolution as multiplication. Moreover,  using  Fourier transforms, one obtains that ${\mathcal{D}}'_{v}(\mR)$ is isomorphic as a complex algebra to   the algebra $\calS'(\mZ)$  of all complex-valued maps on $\mZ$ of at most polynomial growth, that is,
$$
\calS'(\mZ):=\bigg\{\mathbf{a}: \mZ\rightarrow \mC\;\Big|\;
\begin{array}{ll} \exists \textrm{ an integer }k\geq 1 \;\textrm{ such that} \\
	\forall n\in \mZ, \;
	|\mathbf{a}(n)|\leq  (2+|n|)^k
\end{array}\bigg\}.
$$
Since $\calS'(\mZ)$ is a function algebra satisfying the assumptions of  Theorem \ref{gkzfunalg}, by letting $\mathcal H$
to be the set of all functions $h: \mathbb Z\to [2, \infty)$ of the form $h(n)=(2+|n|)^{k}$ for all integers $n$, where $k\ge1$ is an integer, we obtain
\begin{proposition}
	All the linear functionals on the complex function algebra $\calS'(\mZ)$ not vanishing on units and preserving unity, are projections. Hence  the algebra $\calD'_v(\mR)$ of periodic distributions with period $v\in \mathbb R\setminus \{0\}$ satisfies the GK\.Z property. 	
\end{proposition}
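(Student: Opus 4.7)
The plan is to apply Corollary \ref{gkzfunalg} to $\calS'(\mZ)$ with the family $\mathcal H=\{h_k:k\ge 1\}\subseteq [2,\infty)^{\mathbb Z}$ defined by $h_k(n)=(2+|n|)^k$, and then to transport the GK\.Z property from $\calS'(\mZ)$ to $\calD'_v(\mR)$ via the Fourier series isomorphism already recalled.

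First I would verify the two closure conditions on $\mathcal H$. Since $2\le 2+|n|$, we have
\[
h_{k_1}(n)+h_{k_2}(n)\le 2(2+|n|)^{\max(k_1,k_2)}\le (2+|n|)^{\max(k_1,k_2)+1}=h_{\max(k_1,k_2)+1}(n),
\]
which gives condition (1), while $h_{k_1}(n)\,h_{k_2}(n)=(2+|n|)^{k_1+k_2}=h_{k_1+k_2}(n)$ gives condition (2). By the very definition of $\calS'(\mZ)$, the algebra $A$ produced from this $\mathcal H$ in Corollary \ref{gkzfunalg} coincides with $\calS'(\mZ)$: a map $\mathbf a:\mathbb Z\to\mathbb C$ is dominated by some $h_k$ if and only if it has at most polynomial growth.

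Next I would exhibit a discrete function in $\calS'(\mZ)$. The identity map $f(n)=n$ lies in $\calS'(\mZ)$ because $|f(n)|\le 2+|n|=h_1(n)$; its image is $\mathbb Z$, which is discrete in $\mathbb C$; and each level set $f^{-1}(\lambda)$ has at most one element. Hence $f$ is discrete in the sense defined just before Theorem \ref{main}.

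Finally, Corollary \ref{gkzfunalg}(c) yields that every linear functional $\Lambda$ on $\calS'(\mZ)$ with $\Lambda(\mathbf 1)=1$ and not vanishing on units is a projection $p_n$ for some $n\in\mathbb Z$, and such projections are multiplicative, so $\calS'(\mZ)$ is a GK\.Z algebra. Because the GK\.Z property refers only to units, the unity, and linear functionals on the underlying algebra, the $\mathbb C$-algebra isomorphism $\calD'_v(\mR)\cong\calS'(\mZ)$ afforded by Fourier series transfers the property to $\calD'_v(\mR)$. I do not anticipate any real obstacle: all the technical content is absorbed into Corollary \ref{gkzfunalg}, and what remains is simply to produce the explicit $\mathcal H$ and a discrete function.
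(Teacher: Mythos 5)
Your proposal is correct and follows exactly the route the paper takes: the paper derives this proposition by applying Corollary \ref{gkzfunalg} with $\mathcal H=\{(2+|n|)^{k}: k\ge 1\}$ and transferring the result to $\calD'_v(\mR)$ via the Fourier-series isomorphism. You have merely written out the details the paper leaves implicit (verifying conditions (1)--(2), identifying $A$ with $\calS'(\mZ)$, and exhibiting the discrete function $f(n)=n$), all of which check out.
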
  

\subsection{The algebra $\mC\delta_0+\calD^{\prime}_{+}(\mR)$} 
\label{subsec.unitis}

\noindent Let $\calD^{\prime}_{+}(\mR)$ denote the set of all distributions $t \in \calD^{\prime}(\mR)$ 
having their distributional support $\supp t$ contained in the half line $(0,\infty)$. Then $\calD'_+(\mR)$ is an algebra without unity, with convolution as multiplication.  Let $\delta_0$ denote the Dirac distribution supported at $0\in \mR$.  Clearly,  $\calA := \mC \delta_0 + \calD'_+(\mR)$ is a unital  subalgebra of $D^{\prime}(\mathbb R)$, and $\mathcal A$ is isomorphic to the unitisation of $\calD'_+(\mR)$. 
\begin{proposition} 
	The unique linear functional on $\mathcal A$ not vanishing on units and preserving unity,  is the functional induced by the canonical homomorphism $\mathcal A\to \mathcal A/\calD'_+(\mR) =\mathbb C$. Hence the algebra $\mC\delta_0+\calD'_+(\mR)$ satisfies the GK\.Z property.
\end{proposition}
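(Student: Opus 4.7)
The plan is to apply Proposition \ref{topUnitGKZ}. Observe first that $\calA=\mC\delta_{0}+\calD'_{+}(\mR)$ really is the unitisation of $\calD'_{+}(\mR)$: the sum is direct because a nonzero scalar multiple of $\delta_{0}$ has support $\{0\}\not\subseteq(0,\infty)$, and the convolution product on $\calA$ matches the unitisation formula. Hence it suffices to prove that $\calD'_{+}(\mR)$ is a radical algebra; and by the second assertion of Proposition \ref{topUnitGKZ}, it is enough to exhibit a Hausdorff topology on $\calD'_{+}(\mR)$ in which $t^{*n}\to 0$ for every $t\in\calD'_{+}(\mR)$.

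I would use the topology on $\calD'_{+}(\mR)$ induced from the weak-$*$ topology on $\calD'(\mR)$, which is Hausdorff. The key preliminary fact is that for every nonzero $t\in\calD'_{+}(\mR)$ the number $\alpha(t):=\inf\supp t$ is strictly positive. Indeed, $\supp t$ is closed in $\mR$ and contained in the open set $(0,\infty)$; if $\alpha(t)=0$ there would be a sequence in $\supp t$ converging to $0$, forcing $0\in\supp t$, a contradiction.

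The decisive step is Titchmarsh's convolution theorem, which for distributions whose supports are bounded below states that $\inf\supp(s*t)=\inf\supp s+\inf\supp t$ whenever $s*t\ne 0$. Iterating gives $\inf\supp(t^{*n})=n\,\alpha(t)\to\infty$. For any test function $\varphi\in\calD(\mR)$ with $\supp\varphi\subseteq[-M,M]$ we therefore have $\langle t^{*n},\varphi\rangle=0$ as soon as $n\,\alpha(t)>M$, so $t^{*n}\to 0$ in the weak-$*$ topology. Proposition \ref{topUnitGKZ} then yields that $\calA$ is a GK\.Z algebra.

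It remains to identify the unique functional. By Lemma \ref{unitisrad}, the radicality of $\calD'_{+}(\mR)$ gives $\U(\calA)=\calA\setminus\calD'_{+}(\mR)$. If a linear $\Lambda$ on $\calA$ preserves unity and is nonvanishing on units, then $\Lambda$ must vanish on $\calD'_{+}(\mR)$: for if $\Lambda(u)=c\ne 0$ for some $u\in\calD'_{+}(\mR)$, then $-c\delta_{0}+u\in\U(\calA)$ and yet $\Lambda(-c\delta_{0}+u)=0$, a contradiction. Thus $\Lambda$ is forced to be the canonical projection $\calA\to\calA/\calD'_{+}(\mR)=\mC$, which is automatically multiplicative since $\calD'_{+}(\mR)$ is an ideal. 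The main obstacle is invoking the correct version of Titchmarsh's theorem: the classical statement is for compactly supported distributions, but the extension to elements of $\calD'_{+}(\mR)$ with unbounded support is standard and follows by examining the behaviour of $s*t$ near the infimum of its support using compactly supported localisations of $s$ and $t$.
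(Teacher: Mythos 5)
Your proof is correct and follows essentially the same route as the paper: both apply Proposition \ref{topUnitGKZ} with the weak-$\ast$ topology and show that the supports of the convolution powers $t^{\ast n}$ escape to infinity, so that $\langle t^{\ast n},\varphi\rangle$ is eventually zero for each test function $\varphi$. The only divergence is that you invoke Titchmarsh's convolution theorem and flag its extension to unbounded supports as the main obstacle, but this is an unnecessary detour: the argument only needs the inequality $\inf\supp(t^{\ast n})\ge n\,\alpha(t)$, which already follows from the elementary inclusion $\supp(s\ast t)\subseteq\overline{\supp s+\supp t}$ --- exactly the fact the paper uses --- so the exact equality supplied by Titchmarsh, and your worry about justifying it, can both be dispensed with.
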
 

\begin{proof} 
	Since $\mathcal A$ is a subalgebra of the topological algebra $\calD'_+(\mR)$, by Proposition \ref{topUnitGKZ}, it is enough to prove that $\lim_{n\to\infty}a^{\ast n}=0$ for all $a\in \mathcal A$.	This follows from  $\supp (s^{\ast n})\subseteq n\;\!\supp s$ for all positive integers $n$,  and so, for any test function $\varphi \in \calD(\mR)$,
	$(\supp (s^{\ast n}))\cap (\supp \varphi)\neq \emptyset$ for only finitely many $n\in \mN$. 
\end{proof}

\subsection{The algebra $\calE'(\mR)$} 
\label{subsec.E'}

\noindent Let $\calE'(\mR)$ denote the space of all distributions $t \in \calD'(\mR)$ 
that have compact support.  Thus $\calE'(\mR)$ is an algebra with convolution as multiplication.

The Dirac distribution with support equal to $\{a\}$, where $a\in \mR$, will be denoted by $\delta_a$.  We let $G=\{\delta_{a} \mid a\in \mathbb R\}$. Thus $G$ is a group isomorphic  to $(\mathbb R,+)$ by the map
$\mathbb R\to G\ (a\mapsto \delta_{a})$.

For the sake of completeness, in the next Proposition \ref{inv}, we reproduce the characterisation  of units in $\mathcal E^{\prime}(\mR)$ and its proof   from \cite{Sas0}:

\begin{proposition}\cite[Proposition 4.2]{Sas}\label{inv} 
	\label{prop_PW}
	$\U(\calE'(\mR))=\mathbb C^{\bullet}G$ (a direct product of two groups). \end{proposition}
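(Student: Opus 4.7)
The plan is to use the Paley-Wiener-Schwartz characterisation of Fourier-Laplace transforms of compactly supported distributions, combined with Hadamard factorisation for entire functions of order one.

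First, for the easy inclusion, I would note that any $c \delta_a$ with $c\in\mathbb C^{\bullet}$ and $a\in\mathbb R$ is a unit in $\calE'(\mR)$, with inverse $c^{-1}\delta_{-a}$, since $\delta_a \ast \delta_{-a}=\delta_0$ is the convolution unit. The fact that the product $\mathbb C^{\bullet}G$ is direct is immediate: $\mathbb C^{\bullet}$ and $G$ commute in the convolution algebra, and $\mathbb C^{\bullet}\cdot\delta_0 \cap G = \{\delta_0\}$.

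For the reverse inclusion, suppose $t\in\U(\calE'(\mR))$ with inverse $s$. Passing to Fourier-Laplace transforms, the Paley-Wiener-Schwartz theorem gives that $\hat t$ and $\hat s$ are entire functions on $\mathbb C$ satisfying bounds of the form $|\hat t(\zeta)|\le C(1+|\zeta|)^{N}e^{A|\mathrm{Im}\,\zeta|}$, and similarly for $\hat s$; in particular both are entire of order $\le 1$ (and finite exponential type). The identity $t\ast s=\delta_0$ transforms to $\hat t(\zeta)\hat s(\zeta)=1$ for every $\zeta\in\mathbb C$, so $\hat t$ has no zeros. By Hadamard's factorisation theorem applied to the nonvanishing entire function $\hat t$ of order $\le 1$, there exist $a,b\in\mathbb C$ such that $\hat t(\zeta)=e^{a\zeta+b}$.

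The key step is to extract the form $c\delta_a$ from this exponential. Writing $a=\alpha+i\beta$ and $\zeta=\xi+i\eta$, one has $|\hat t(\xi+i\eta)|=e^{\alpha\xi-\beta\eta+\mathrm{Re}\,b}$. The Paley-Wiener-Schwartz bound must hold uniformly in $\xi\in\mathbb R$ for each fixed $\eta$, which forces $\alpha=0$; the same argument applied to $\hat s=e^{-a\zeta-b}$ is automatic. Hence $a=i\beta$ for some $\beta\in\mathbb R$, and $\hat t(\zeta)=e^{b}e^{i\beta\zeta}$, which is the Fourier-Laplace transform of $e^{b}\delta_{\beta}$ (modulo the chosen sign convention, giving $\delta_{-\beta}$). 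By injectivity of the Fourier-Laplace transform on $\calE'(\mR)$, we conclude $t=c\delta_{a}$ with $c=e^{b}\in\mathbb C^{\bullet}$ and $a=\pm\beta\in\mathbb R$, finishing the proof.

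The main obstacle is the application of Hadamard factorisation combined with checking the exponential-type/polynomial-growth constraint along the real axis; once one sees that $\alpha=0$ is forced, the identification of $\hat t$ as a translated delta is essentially immediate.
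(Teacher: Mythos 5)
Your proof is correct, but it follows a genuinely different route from the paper's. The paper argues entirely on the support side: it invokes H\"ormander's theorem on supports, $\mathrm{c.h.supp}(t\ast s)=\mathrm{c.h.supp}(t)+\mathrm{c.h.supp}(s)$, to force both $t$ and its inverse $s$ to be supported at single points, then uses the structure theorem for point-supported distributions (finite linear combinations of $\delta_p$ and its derivatives) and a linear-independence argument to rule out any derivative terms. You instead work on the Fourier--Laplace side: Paley--Wiener--Schwartz makes $\hat t$ entire of order at most one, the relation $\hat t\,\hat s=1$ makes it zero-free, Hadamard gives $\hat t=e^{a\zeta+b}$, and the polynomial bound on the real axis kills the real part of $a$, identifying $t$ as $c\delta_{a}$ up to the sign convention. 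Both arguments are complete; your route trades the support theorem for Hadamard factorisation (and is in some sense closer to the underlying mechanism, since H\"ormander's support theorem is itself proved by Paley--Wiener-type estimates), while the paper's version stays at the level of distribution supports and avoids any explicit complex-analytic factorisation. The only points worth being careful about in your write-up --- the order-$\le 1$ hypothesis for Hadamard, the fact that the growth restriction is only needed along the real axis to force $\alpha=0$, and the injectivity of the Fourier--Laplace transform on $\calE'(\mR)$ --- are all correctly handled.
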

\begin{proof}
	Clearly, $\mathbb C^{\bullet}G\subseteq \U(\calE'(\mR))$.
	For the converse inclusion, suppose that $t$ is invertible in $\calE'(\mR)$. Then there exists a distribution $s\in \calE'(\mR)$ such that 
	$t\ast s=\delta_0$. By the theorem on supports \cite[Theorem~4.3.3, p.107]{Hor}, we have 
	$$
	\textrm{c.h.supp}(t\ast s)=\textrm{c.h.supp}(t)+\textrm{c.h.supp}(s),
	$$ 
	where, for a distribution  $a\in \calE'(\mR)$, the notation $\textrm{c.h.supp}(a)$ is used for the closed convex hull of 
	$\supp a$, that is, the intersection of all closed convex sets containing $\supp a $. 
	So we obtain 
	$$
	\{0\}=\textrm{c.h.supp}(\delta_0)= \textrm{c.h.supp}(t\ast s)=\textrm{c.h.supp}(t)+\textrm{c.h.supp}(s),
	$$
	from which it follows that $\textrm{c.h.supp}(t)=\{a\}$ and  
	$\textrm{c.h.supp}(s)=\{-a\}$ for some $a\in \mR$. But then 
	also  $\supp t=\{a\}$ and  
	$\supp s=\{-a\}$. As distributions with 
	support in a point $p$ are linear combinations of ${\scaleobj{0.9}{\delta_p}}$ 
	and its derivatives ${\scaleobj{0.9}{\delta_p^{{\scaleobj{0.81}{(n)}}}}}$ \cite[Theorem~24.6, p.266]{Tre}, 
	$t$ and $s$ have the form 
	$$
	t={\scaleobj{0.93}{\sum_{n=0}^N}}\;\! t_n \delta_a^{(n)}, \textrm{ and } 
	s={\scaleobj{0.93}{\sum_{m=0}^M}}\;\! s_m \delta_{-a}^{(m)}, 
	$$
	for some integers $N,M\geq 0$ and $t_n,s_m\in \mC$ ($0\leq n\leq N$, $0\leq m\leq M$). 
	Then $t\ast s=\delta_0$ implies that $N=M=0$ and $t_0s_0=1$, since for each $a\in \mathbb R$, the elements $\delta_{c}^{(n)}$, where $c\in \{0,-a, a\}$ and $n\ge1$ is an integer, are linearly independent over $\mathbb C$ (if $a=0$, then $c=0$).
	 It follows that  $t_0\neq 0$. 
	Thus we have that 
	$
	t= t_0 \delta_a\in \{c\delta_p: p\in \mR, \;0\neq c\in \mC  \}.
	$  
	This completes the proof.
\end{proof}

\begin{proposition} 
	  $\calE'(\mR)$ is  not  a GK\.Z  algebra.
\end{proposition}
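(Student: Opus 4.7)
The plan is to invoke Corollary \ref{groring} directly, using the characterisation of units from Proposition \ref{inv}. Let $G=\{\delta_a : a\in\mathbb R\}$, which Proposition \ref{inv} identifies with the group $(\mathbb R,+)$, and yields $\U(\calE'(\mR))=\mathbb C^\bullet G$.

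First I would identify $\mathrm{Span}(\U(\calE'(\mR)))$ with the group algebra $\mathbb C[G]$: the $\mathbb C$-span of $\mathbb C^\bullet G$ is exactly the set of finite $\mathbb C$-linear combinations of Diracs $\delta_a$, and because convolution satisfies $\delta_a\ast\delta_b=\delta_{a+b}$, this span is a subalgebra of $\calE'(\mathbb R)$ isomorphic to the group algebra $\mathbb C[G]$. Next I would check that this inclusion is strict: for instance $\delta_0'\in\calE'(\mathbb R)$ is not a finite linear combination of Dirac deltas (its support is a point but it is not itself a Dirac delta), so $\mathbb C[G]\subsetneq \calE'(\mathbb R)$.

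Then I would verify that $\mathbb C[G]$ and $\calE'(\mathbb R)$ share the same group of units. Any unit of $\mathbb C[G]$ remains invertible in the larger algebra $\calE'(\mathbb R)$, so $\U(\mathbb C[G])\subseteq \U(\calE'(\mR))=\mathbb C^\bullet G$; conversely every element of $\mathbb C^\bullet G$ has an inverse $c^{-1}\delta_{-a}\in \mathbb C[G]$ already inside the group algebra. Hence $\U(\mathbb C[G])=\U(\calE'(\mR))$.

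With these facts in hand, $\calE'(\mathbb R)$ is a proper algebra extension of the group ring $\mathbb C[G]$ sharing the same units, so Corollary \ref{groring} immediately yields that $\calE'(\mathbb R)$ is not a GK\.Z algebra. There is no real obstacle here — the work is entirely front-loaded into Proposition \ref{inv}; the only point requiring a moment of care is the strictness of the inclusion $\mathbb C[G]\subsetneq \calE'(\mathbb R)$, which is witnessed by any derivative of a Dirac mass.
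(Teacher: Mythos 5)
Your proposal is correct and follows essentially the same route as the paper: both identify $\Span(\U(\calE'(\mR)))$ with the group ring $\mathbb C[G]$ via Proposition \ref{inv}, observe the inclusion into $\calE'(\mR)$ is proper, and invoke Corollary \ref{groring}. The only cosmetic difference is the witness for strictness (you use $\delta_0'$, the paper notes that elements of the span have finite support), and you spell out the equality of unit groups that the paper leaves implicit.
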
  

\begin{proof} 
	Using Proposition \ref{inv}, we see	that the distributions in $\Span(\U(\calE'(\mR)))$ have  finite support, so $\Span(\U(\calE'(\mR)))$ is properly  contained in  $\calE'(\mR)$. Since the distributions $\delta_{a}\  (a\in\mathbb R)$ are linearly independent over $\mathbb C$, it follows that  the algebra $\Span(\U(\calE'(\mR)))$ is isomorphic to the group ring $\mathbb C[G]$. From Corollary \ref{groring} it follows that $\calE'(\mR)$ is not a GK\.Z algebra.
\end{proof} 

We can prove directly that $\calE'(\mR)$ is not a GK\.Z  algebra. 

Let   $\varphi$ be a function in $C^{\infty}(\mathbb R)$,  and let $\Lambda=\Lambda_{\varphi}: \calE'(\mR)\to \mathbb C$ be  the functional    $\Lambda(t)=\langle t, \varphi\rangle$ for all $t \in \calE'(\mR)$.  We have for all $a\in \mathbb R$: $\Lambda(\delta_a)=\varphi(a)$.  For all $a,b\in \mathbb R$: $\Lambda(\delta_{a})\cdot \Lambda(\delta_{b})=\Lambda(\delta_{a+b})=a+b$, and $\Lambda(\delta_{a})\Lambda(\delta(b))=\varphi(ab)$. Hence we have:

\begin{enumerate} 
	\item 
$\Lambda$ preserves units if and only if  $\varphi(a)\ne0$ for all $a\in \mathbb R$.
	
	\item 
$\Lambda(\delta_0)=1$	if and only if $\varphi(0)=1$.

	\item
	$\Lambda(uv)=\Lambda(u)(v)$ for all $u,v\in \U(\calE'(\mR))$  if and only if $\varphi(a+b)=\varphi(a)\varphi(b)$ for all $a,b\in \mathbb R$.
\end{enumerate}	
	The only continuous functions $\varphi$ with range contained in $\mathbb R$ satisfying the conditions stated in (2) and (3) above are the functions $c^{x}$, where $c\in (0,\infty)$. Thus the function 
	  $\varphi(x)=1+x^{2}$ for $x\in \mathbb R$ does not satisfy this conditions, but it does satisfy the the properties in (1) and (2). Hence  the corresponding functional $\Lambda_{\varphi}$ preserves
	   units and the unity, but it is not  multiplicative.
It follows that  $\mathcal E^{\prime}(\mathbb R)$ is not a  GK\.Z  algebra. 

\noindent 
For every $\varphi$ the linear functional on $\Lambda:\calE'(\mR)\to \mathbb C$ given by  $\Lambda(t)=\langle t, \varphi\rangle$ for all $t \in \calE'(\mR)$, is continuous, when $\calE'(\mR)$ is equipped with the weak dual/weak-$\ast$ topology $\sigma(\calE'(\mR),\calE(\mR))$; see e.g. \cite{Tre}.

\end{document}